\newtheorem{theorem}{Theorem}[section]
\newtheorem{corollary}[theorem]{Corollary}
\newtheorem{lemma}[theorem]{Lemma}
\newtheorem{proposition}[theorem]{Proposition}
\theoremstyle{definition}
\newtheorem{definition}[theorem]{Definition}
\theoremstyle{remark}
\newtheorem{remark}[theorem]{\sc Remark}
\newtheorem{example}[theorem]{\sc Example}
\renewcommand{\Box}{\square}    
\newcommand{\Sing}{{\mathrm{Sing\hspace{2pt}}}}
\newcommand{\Reg}{{\mathrm{Reg\hspace{1pt}}}}
\newcommand{\rank}{{\mathrm{rank\hspace{2pt}}}}
\newcommand{\ord}{{\mathrm{ord}}}
\newcommand{\im}{{\mathrm{Im\hspace{2pt}}}}
\newcommand{\cl}{{\mathrm{closure}}}
\newcommand{\pr}{{\mathrm{pr}}}
\newcommand{\red}{{\mathrm{red}}}
\newcommand{\jac}{{\rank\mathrm{Jac}}}
\newcommand{\thick}{\mathrm{thick}}
\newcommand{\thin}{\mathrm{thin}}
\newcommand{\e}{\varepsilon}
\newcommand{\m}{\setminus}
\newcommand{\s}{\subset}
\newcommand{\fin}{\hspace*{\fill}$\Box$\vspace*{2mm}}
\newcommand{\cA}{{\mathcal A}}
\newcommand{\cB}{{\mathcal B}}
\newcommand{\cE}{{\mathcal E}}
\newcommand{\cI}{{\mathcal I}}
\newcommand{\cO}{{\mathcal O}}
\newcommand{\cP}{{\mathcal P}}
\newcommand{\cU}{{\mathcal U}}
\newcommand{\bC}{{\mathbb C}}
\newcommand{\bE}{{\mathbb E}}
\newcommand{\bP}{{\mathbb P}}
\newcommand{\bN}{{\mathbb N}}
\newcommand{\bZ}{{\mathbb Z}}
\begin{document}

\title[Image of analytic map germs]{The image complexity of an analytic map germ}

\author{\sc Cezar Joi\c ta}
\address{Institute of Mathematics of the Romanian Academy, P.O. Box 1-764,
 014700 Bucharest, Romania and Laboratoire Europ\' een Associ\'e  CNRS Franco-Roumain Math-Mode}
\email{Cezar.Joita@imar.ro}

\author{Mihai Tib\u ar}
\address{Univ. Lille, CNRS, UMR 8524 -- Laboratoire Paul Painlev\'e, F-59000 Lille,
France}  
\email{mihai-marius.tibar@univ-lille.fr}

\subjclass[2020]{14E18, 32B10, 32S45, 32C20}


\keywords{set germs,  analytic maps, arc spaces}

\thanks{The authors acknowledge support of the project ``Singularities and Applications'' -  CF 132/31.07.2023 funded by the European Union - NextGenerationEU - through Romania's National Recovery and Resilience Plan,  and support by the grant CNRS-INSMI-IEA-329. 
}

\begin{abstract}

The image of a holomorphic map germ  is not necessarily locally open, and it is not always well-defined as a 
 set germ. We find the structure of what becomes the image of a map germ when the target is a surface. We encode it as a decorated tree and we derive a complexity degree.
 
\end{abstract}


\maketitle

\section{Introduction}\label{intro}
  The need of assuming that the image of an analytic map germ is well-defined  \emph{as a set germ} has been pointed out by John N. Mather in his fundamental study of the stratified structure of maps \cite{Ma}. To see what this means, let  $F :(X,\mathfrak{p}) \to (Y,y)$ be a non-constant complex analytic map germ between irreducible complex analytic set germs. 
 Consider an embedding $(X,\mathfrak{p})\subset (\bC^N, 0)$, and the intersection $X\cap B_{\e}$ of some representative of $X$  with a ball $B_{\e}\s \bC^N$ of small enough radius $\e>0$ centred at $0$.
  Then the map germ $F$ has a well defined  \emph{local image} if and only if  the germ at $y$ of the set $F(B_{\e}\cap X)$ is independent of the radius $\e$.  Map germs as simple as  the blow-up $(x,z) \mapsto (x, xz)$ at the origin of $\bC^{2}$  do not have a well defined local image. 
  
  This extends the notion of \emph{locally open maps},  see e.g. \cite{Hu}: the map $F$ germ is locally open at $y$ if for any radius $\e>0$,   the set $F(B_{\e}\cap X)$ contains $y\in Y$ as an interior point. For instance, a  non-constant holomorphic function germ $f : (\bC^{n}, 0) \to (\bC, 0)$ is locally open, as a consequence of the Open Mapping Theorem.
  A criterion for the image of $F$ to be locally open, which responds to Huckleberry's conjecture in \cite{Hu}, has been proved recently, see Remark \ref{r:huck} below.
  
Moreover, in the case $\dim_{y}Y =2$, it has been proved in \cite{JT-arkiv} that if  the image of $F$ is a well defined set germ at $y$,  then this is an \emph{irreducible analytic set germ}, see  Proposition \ref{p:arkivprop} and \S\ref{s:extension}.

\smallskip 
 
      The following natural question occurs: 
      
 \medskip

\noindent
(*) \hspace{.7cm} \emph{If the image of $F: (X,\mathfrak{p}) \to (Y,y)$ is \emph{not well-defined} as a set germ,  then what is it?  If one  can still give a meaning to the ``image of the map germ $F$'',   then what is the structure of this object?}

\medskip

We propose here the following new definition. 
  
  Let $\cA_{X,\mathfrak{p}}$ denote the set of irreducible curve\footnote{We consider here curves as geometric objects,  and not as arcs  defined by given parametrisations.} germs $(C,\mathfrak{p})\subset (X,\mathfrak{p})$. There is a well-defined mapping:
\[ F_{*} : \cA_{X,\mathfrak{p}} \to \cA_{Y,y}, \ \ \ (C,\mathfrak{p})\longmapsto \bigl( F(C),y\bigr) .\]

\begin{definition}[\textbf{\emph{The image of $F$}}] \ \\
 Let $(X,\mathfrak{p})$ be an irreducible germ of an $n$-dimensional complex analytic space, $n\geq 2$ and let $F :(X,\mathfrak{p})\to (Y, y)$ be a 
 holomorphic map germ.
We call  \emph{image of $F$} the subset of curve germs $F_*(\cA_{X,\mathfrak{p}})\subset \cA_{Y,y}$.  
\end{definition}

    Denoting by $0_{y}$  the degenerated curve germ at $y$,  the map germ $F$ is constant if and only if  $F_*(\cA_{X,\mathfrak{p}}) = 0_{y}$.

\begin{remark}\label{r:huck}
The equality $F_*(\cA_{X,\mathfrak{p}}) = \cA_{Y,y}$ is equivalent to ``$F$ has no gap curve''  in the terminology of  
\cite[\S 3.2]{JT-arkiv}. 
Therefore, by \cite[Theorem 3.5]{JT-arkiv}, the assertion ``$F$ is open at $y$'' is equivalent to the equality $F_*(\cA_{X,\mathfrak{p}})=\cA_{Y,y}$.
\end{remark}

 \medskip

    We answer the question (*)  by finding an algorithmic way to describe $F_*(\cA_{X,\mathfrak{p}})$ in case target space $Y$ is a surface\footnote{While the problem remains open for $\dim Y >2$.}.
We will see that the complexity of the image is encoded by a certain new type of decorated finite tree.

To give here a glance of how our approach works, we stick for now to the smooth case  $(Y, y) = (\bC^{2},0)$. 

Let $\tau_1:S_1\to \bC^2$ be the blow-up at the origin, and let $E_1$ be the exceptional divisor.  The set of the proper transforms by $\tau_1$ of all curve germs in $\cI := F_*(\cA_{X,\mathfrak{p}})$ will be denoted  by $\tau_1^*\cI$.
For some $\xi\in E_1$,  let then 
$$(\tau_1^*\cI)_\xi := \bigl\{ \gamma \in \cA_{S_1,\xi}  \mid F_*(\gamma)\in \cI \bigr\}.$$  

We thus have the equality
$\tau_1^*I=\bigcup_{\xi\in E_1}(\tau_1^*\cI)_\xi$.
 A point $\xi\in E_{1}$ can be of one of the following 3 types only (see also the equivalent Definition \ref{d:points}):
 
 \medskip
 
 $\bullet$ \emph{determined-empty},  if $(\tau_1^*\cI)_\xi= 0_{\xi}$.
 
 $\bullet$ \emph{determined-full}, \  if $(\tau_1^*\cI)_\xi=\cA_{S_1,\xi}$.
 
 $\bullet$ \emph{undetermined},  \ \  if  $(\tau_1^*\cI)_\xi\neq 0_{\xi}$ and
$(\tau_1^*\cI)_\xi\neq \cA_{S_1,\xi}$.

 \medskip

These definitions make sense for any $\cI\subset\cA_{\bC^2,0}$, however we will show in Lemma \ref{l:finite} and Lemma \ref{l:onepoint} that if $\cI$ is the image of a holomorphic map germ, then precisely one of the following two situations may occur:

\smallskip
 \textbf{(a)} there are at most finitely many undetermined points in $E_1$ and all other points are determined-full; we then say that  $E_1$ is  \emph{thick},\\
 or
 
 \smallskip
\textbf{(b)} there exists one point $\xi_1\in E_1$ such that any other point $\xi\in E_1\setminus\{\xi_1\}$ is determined-empty; we then say that  $E_1$ is  \emph{thin}.

 \medskip

In the second step we blow-up simultaneously at all undetermined points of $E_1$, say by the map $\tau_{2}$. This map has finitely many exceptional divisors $E_{2,i}$, each of which is thick or thin.  Similarly, in the 3rd step we blow-up simultaneously at all undetermined points of the exceptional divisors created at step 2, and we call $\tau_{3}$ this blow-up map. We continue in this way, step-by-step.  In  \S\ref{s:finiteness} we show that after finitely many steps there are no more undetermined points, and therefore this process ends.  The exceptional divisor $\bE$ of the total blow-up $\tau$ is a tree with thick and thin normal crossing branches.



\subsection{Definition of the \emph{blossom tree} $\cB_{F}$}\label{ss:blossomtree}\ 

Let 
\[
 S=S^{(s)} \stackrel{\tau_{s}}{\longrightarrow} S^{(s-1)} \stackrel{\tau_{s-1}}{\longrightarrow} \cdots \stackrel{\tau_{1}}{\longrightarrow} \bC^{2}
\]
be a sequence of successive blow-ups $\tau_{k}$, $k\in \overline{1,s}$, where $\tau_{k}$, for $k>1$, is the result of blowing up simultaneously all the undetermined  points which occur on the exceptional divisors created at the step $k-1$.  By the finiteness result of  \S\ref{s:finiteness} we may assume that the blow-up $\tau_{s}$ has no more undetermined points.
 
 Let $\tau : (S, E)  \to (\bC^{2},0)$ denote the composition of the finite sequence of blow-ups, and let $\bE := \cup_{i}E_{i}$ be the exceptional divisor, where each irreducible component $E_{i}$ is isomorphic to $\bP^{1}$. At step $k$, the blow-up $\tau_{k}$ at finitely many points yields a number of irreducible exceptional divisors which become thick or thin. They preserve their names, i.e. ``thick'' or ``thin'', until the end of the process.  
 
 For such a component $E_{i}\subset E$, we set:
 $$ \cE^{\thick}_{i} :=  \bigsqcup_{\xi\in E_{i}}\cA_{S, \xi}  \ \ \mbox{ or } \  \  \cE^{\thin}_{i} :=  \bigsqcup_{\xi\in E_{i}}0_{\xi}, $$
 and call $\cE^{\thick}_{i}$ a \emph{thick branch}, and $\cE^{\thin}_{i}$ a \emph{thin branch},  respectively.   We  will also call \emph{terminal} a thin branch which has a determined-full point, called \emph{terminal point}. For a terminal point $\xi_{j}\in E$ we set:
 $$ \cP_{j} := \cA_{S, \xi_{j}}.$$
 
  A terminal branch corresponds therefore to an exceptional divisor $E_{i}$ which occurs at the last step of some sequence of ramified blow-ups. Such a ``last step divisor'' necessarily yields a thin branch (follows from the algorithm \S\ref{ss:algo}, see the footnote at Step 1).
  
  The undetermined points for the non-terminal branches, or the terminal point of a terminal branch, will be called \emph{special points}. The special point of a thin branch created at some  step of the blow-up process occurs precisely at the intersection of the new divisor with the preceding divisor from which it grows.
   To each branch, one attaches a \emph{label} containing the following information: (1).  the place of each of its special points, and (2). the number of the step when it appears in the blow-up process.

 The \emph{blossom tree} $\cB_{F}$ associated with $F$ is then, by definition, the collection of labelled thick and thin branches:  
 $$\cB_{F} := \bigcup_{i}\cE^{\thick}_{i} \cup \bigcup_{j}\cE^{\thin}_{j}$$
except when the blow-up process yields a linear sequence of thin branches (thus without thick branches), in which case there is a unique\footnote{A trivial example is the blow-up map $(x,y)\mapsto (x,xy)$, the blossom tree of which reduces to a thick point.}  terminal ``thick'' point with structure $\cP$. We then set by definition:
 $$\cB_{F} :=  \bigcup_{j}\cE^{\thin}_{j} \cup \cP$$
   
   \bigskip

The map $\tau : S \to \bC^{2}$ restricts to map germs $\tau_{|} : (S,\xi) \to (\bC^{2},0)$, for any $\xi \in E$, which induce maps  ${\tau_{|}}_{*} : \cA_{S, \xi} \to \cA_{\bC^{2},0}$. 

With these notations we may state our main result:  

\begin{theorem}\label{t:main}
 If  $F:(X,\mathfrak{p}) \to (\bC^{2}, 0)$ is a holomorphic map germ the image of which is not well-defined as a set germ, then 
there is a unique blossom tree $\cB_{F}$ such that:
$$F_*(\cA_{X,\mathfrak{p}}) = \tau_{*}(\cB_{F}).$$
\end{theorem}
%
It follows from the construction that  two holomorphic map germs having the same image generate the same blossom tree; in particular, we show that
the blossom tree $\cB_{F}$ may be obtained from a map germ defined on $(\bC^2, 0)$:

\begin{theorem}\label{t:main2}
 Let $(X,\mathfrak{p})$ be a complex irreducible space germ of dimension $n\ge 2$, and let 
$F=(f,g):(X,\mathfrak{p})\to (\bC^2, 0)$ be a holomorphic map germ. Then there exists a holomorphic map germ 
$G:(\bC^2, 0)\to (\bC^2, 0)$ such that $G_*(\cA_{\bC^2,0})=F_*(\cA_{X,\mathfrak{p}})$. In particular, $\cB_{G}$ coincides with $\cB_{F}$.
\end{theorem}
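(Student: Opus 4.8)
The plan is to reduce the general source to the model source $(\bC^2,0)$ by restricting $F$ to a sufficiently generic $2$-dimensional subgerm of $X$ and then parametrising that subgerm. Since $(\bC^2,0)$ is itself an admissible source, the inclusion $G_*(\cA_{\bC^2,0})\subseteq F_*(\cA_{X,\mathfrak{p}})$ will hold automatically for any $G$ of the form $G=F\circ\sigma$ with $\sigma\colon(\bC^2,0)\to(X,\mathfrak{p})$ holomorphic: such a $\sigma$ sends curve germs to curve germs (or to the degenerate germ), so that $G_*(\cA_{\bC^2,0})=F_*\bigl(\sigma_*(\cA_{\bC^2,0})\bigr)\subseteq F_*(\cA_{X,\mathfrak{p}})$. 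Moreover, once the image equality is established, the assertion $\cB_G=\cB_F$ is immediate from Theorem \ref{t:main} and the uniqueness of the blossom tree attached to an image set. Hence the entire content lies in producing a single $\sigma$ for which the reverse inclusion $F_*(\cA_{X,\mathfrak{p}})\subseteq G_*(\cA_{\bC^2,0})$ also holds.

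First I would construct $\sigma$. Embedding $(X,\mathfrak{p})\subset(\bC^N,0)$ and cutting with $n-2$ generic hyperplanes through the origin yields an irreducible $2$-dimensional subgerm $Z=X\cap H_1\cap\cdots\cap H_{n-2}\subset X$, and a generic linear projection restricts to a finite surjection $\sigma\colon(\bC^2,0)\to(Z,\mathfrak{p})$; set $G:=F\circ\sigma$. The reverse inclusion amounts to the following realisation property: for every image curve $D=F(C)\in F_*(\cA_{X,\mathfrak{p}})$ there is a curve germ in $Z$ whose $F$-image is $D$. Here a dimension count enters: since $F|_C$ is nonconstant, the fibre germ $F^{-1}(D)\subset X$ has dimension at least $n-1$, so cutting by the $n-2$ generic hyperplanes leaves $Z\cap F^{-1}(D)$ of dimension at least $1$; for generic hyperplanes this intersection is a curve germ not collapsed by $F$, hence mapping onto $D$. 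Thus, for each fixed $D$, a generic choice of the $H_i$ realises $D$.

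The main obstacle is uniformity: a priori the genericity conditions on the $H_i$ depend on $D$, while $F_*(\cA_{X,\mathfrak{p}})$ contains uncountably many curve germs. To turn the pointwise statement into a single choice of $Z$ valid for the whole image I would invoke the structural results of \S\ref{s:finiteness} together with Theorem \ref{t:main}: the image equals $\tau_*(\cB_F)$ and the blossom tree is governed by finite data, namely the finite blow-up sequence $\tau$, the finitely many special points, and the thick/thin label of each created divisor (Lemma \ref{l:finite}, Lemma \ref{l:onepoint}). It then suffices to arrange that $G=F\circ\sigma$ has the same blossom tree, and the thick/thin dichotomy reduces the condition to be verified at each infinitely near point to a single generic test together with the behaviour at the finitely many special points. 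This replaces the uncountable family of curve-realisation conditions by finitely many open dense conditions on the hyperplanes $H_i$, which can be satisfied simultaneously.

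Finally I would verify that the resulting $G$ reproduces $\cB_F$ exactly. The heart of this step is that a generic hyperplane section is compatible with the resolution computing the blossom tree: blowing up the target $\bC^2$ and restricting, the types determined-empty, determined-full and undetermined of the points of each exceptional divisor, and hence the labels thick and thin together with the positions of the special points, are preserved under a generic $2$-dimensional restriction, since they are detected by the numerical resolution data of the pencil generated by $f$ and $g$, which is stable under generic section. As $\sigma$ is a finite surjection onto $Z$, passing from $F|_Z$ to $G=F\circ\sigma$ alters neither the image curve set nor the blossom tree. We conclude $\cB_G=\cB_F$, and therefore $G_*(\cA_{\bC^2,0})=\tau_*(\cB_G)=\tau_*(\cB_F)=F_*(\cA_{X,\mathfrak{p}})$, as required.
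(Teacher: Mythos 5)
Your reduction of the ``in particular'' clause to Theorem \ref{t:main}, and the easy inclusion $G_*(\cA_{\bC^2,0})\subseteq F_*(\cA_{X,\mathfrak{p}})$ for any $G=F\circ\sigma$, are fine, but the two pillars of your construction --- the existence of $\sigma$ and the realisation property of a generic section --- both fail. First, $\sigma$ does not exist in general: a generic linear projection $\bC^N\to\bC^2$ restricts to a finite map $(Z,\mathfrak{p})\to(\bC^2,0)$ (local Noether normalisation), i.e.\ it goes in the direction opposite to the one you need. Producing a finite surjection, or even a locally open map, $(\bC^2,0)\to(Z,\mathfrak{p})$ onto a singular surface germ is precisely the parametrisation problem solved by Theorem \ref{t:cj} of Col\c toiu--Joi\c ta, and it is obstructed: it requires all exceptional curves of a resolution of $Z$ to be rational, which fails for instance when $Z$ has a simple elliptic singularity (and generic plane sections through a singular point $\mathfrak{p}$ of $X$ remain singular, with no control on their resolutions). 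The paper's proof is built exactly around this obstruction: instead of sectioning the source, it contracts the union $B$ of the \emph{thick} components of the exceptional divisor $\bE$ (possible because the intersection matrix is negative definite, by Grauert--Mumford), obtaining a normal surface germ $(Y,y)$ with $\nu_*(\cA_{Y,y})=F_*(\cA_{X,\mathfrak{p}})$; for this particular $(Y,y)$ the rationality hypothesis of Theorem \ref{t:cj} holds automatically, since the exceptional set of the resolution $\rho:S\to Y$ is $B$, a union of copies of $\bP^1$.

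Second, even granting $\sigma$, your realisation claim is false, and not merely as an unrepaired quantifier exchange. Take $X=\bC^3$ and $F(x,y,z)=(xy,xz)$. Then $F^{-1}(0)$ contains the line $\{y=z=0\}$, of dimension $n-p=1$, so $F$ is locally open by Proposition \ref{p:n-2dim} and $F_*(\cA_{\bC^3,0})=\cA_{\bC^2,0}$. But on any plane section $Z=\{z=ax+by\}$ with $a\neq 0$ one has $F|_Z=(xy,\ ax^2+bxy)$, which after the target change $(u,v)\mapsto(u,v-bu)$ becomes $(xy,ax^2)$; its image misses the curve $\{v=bu\}$, because $F^{-1}\bigl(\{v=bu\}\bigr)\cap Z$ is one-dimensional but is collapsed by $F$ to the origin --- exactly the degeneration your dimension count assumes away. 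The missed curve varies with the section: for each fixed $D$ a generic $Z$ realises $D$, yet every $Z$ fails for some $D$, so the genericity conditions cannot be reduced to finitely many ones. Composing with the blow-up, $F'=(xy,\,x^2yz)$, exhibits the same phenomenon for a map whose image is not well-defined as a set germ, so the failure occurs squarely inside the intended scope; in particular, the thick/thin labels are not stable under generic sections, contrary to what your last paragraph asserts.
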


The proof of Theorem \ref{t:main} is developed as an algorithm in \S\ref{s:blowup}. Its cornerstone is the finiteness of this algorithm,  shown in \S\ref{s:finiteness}, with conclusion formulated in \S\ref{ss:proofmain}.

 Our Theorem \ref{t:singsurf} gives the extension of Theorem \ref{t:main} to holomorphic map germs $F:(X,\mathfrak{p}) \to (Y, y)$, where $(Y,y)$ is an irreducible  surface germ.  Our examples in \S\ref{s:ex} show several types of blossom trees,  along with some details of our Algorithm \ref{ss:algo} computations.
 
 \
 
 The above results allow us to introduce a complexity degree for the image, as follows. Let us first recall that for two irreducible curve germs $C_{1},  C_{2} \in \cA_{\bC^2,0}$, one classically says that their \emph{order of tangency}  is $m\ge 0$ if, by blowing up successively, their proper images can be separated  precisely after $m+1$ blow-ups and no less. We denote this by 
 $$\tan(C_{1}, C_{2}) =m.$$
   In particular,  $C_{1},  C_{2}$ have different tangent cones if and only if  $\tan(C_{1}, C_{2}) =0$.

\begin{definition}[\textbf{\emph{Image complexity degree}}] \label{d:degree} \ \\
     For some curve germ $C \in F_*(\cA_{X,\mathfrak{p}})\subset \cA_{\bC^2,0}$, let us set:
$$\kappa_{C}(F) := \min \Bigl\{   m\in\bN \mid   D\in \cA_{\bC^2,0}, \  \tan(D,C) \ge m  \Longrightarrow  D \in F_*(\cA_{X,\mathfrak{p}})  \Bigr\}.
$$
We then define the \emph{image complexity degree} of $F$ as:
$$
 \kappa(F):=\max  \bigl\{ \kappa_{C}(F) \mid   C  \in F_*(\cA_{X,\mathfrak{p}})  \bigr\} .
 $$
\end{definition}
We show by Theorem \ref{t:degree} that 
if the image of $F$ is not well-defined as a set-germ, then $\kappa(F)$ is precisely the maximal length of chains of branches in the blossom tree $\cB_{F}$.




\bigskip

\section{When the local image is a well-defined set germ}\label{ss:welldefinedimage}


 Let $(X,\mathfrak{p})\subset (\bC^N, 0)$ be a complex irreducible space germ of dimension $n\geq 2$, and let $F:(X,\mathfrak{p})\to (\bC^2, 0)$ be a non-constant holomorphic map germ.  
 
%
Let us first discuss the situation when $F$ has a well-defined image as a set germ, and how this is characterized.
 
 \begin{proposition}\label{p:arkivprop}\cite[Proposition 2.1]{JT-arkiv}
If  the image of $F$ is a well defined set germ at the origin,  then either $(\im F, 0)=(\bC^2,0)$ and thus $F_*(\cA_{X,\mathfrak{p}}) =\cA_{\bC^{2},0}$, or $(\im F,0)=(C, 0)$ where  $(C,0)\subset (\bC^2,0)$ is an irreducible  complex curve germ, and in this case $F_*(\cA_{X,\mathfrak{p}})$ is a single element of  $\cA_{\bC^{2},0}$.  
\end{proposition}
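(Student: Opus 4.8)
The plan is to identify $(\im F,0)$ as an irreducible analytic germ, split according to its dimension, and then read off the behaviour of $F_*$ in each case. First I would exploit the standing hypothesis that the image is a \emph{well-defined} set germ to treat it as a genuine analytic germ. Since $(X,\mathfrak{p})$ is irreducible, a small enough representative $X\cap B_\e$ is irreducible, so $F(X\cap B_\e)$ is the continuous image of an irreducible space and is therefore topologically irreducible; combined with the analyticity of a well-defined image germ (the input I would import from the companion theory of \cite{JT-arkiv} rather than reprove), this shows that $(\im F,0)$ is an irreducible analytic germ. As $F$ is non-constant and $(\im F,0)\subseteq(\bC^2,0)$, its dimension $d$ satisfies $1\le d\le 2$, which is the dichotomy of the statement.

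Next I would separate the two cases. If $d=2$, then an irreducible $2$-dimensional analytic germ inside the smooth surface germ $(\bC^2,0)$ must coincide with $(\bC^2,0)$; equivalently, $F$ is open at $0$, meaning $F(X\cap B_\e)$ contains a full neighbourhood of $0$. If $d=1$, then $(\im F,0)=(C,0)$ for an irreducible curve germ $C$, and the corresponding half of the conclusion is almost immediate: for any $(C',\mathfrak{p})\in\cA_{X,\mathfrak{p}}$ on which $F$ is non-constant, $F(C')$ is an irreducible $1$-dimensional germ contained in $C$, hence equal to $C$. Curves collapsed to the point contribute only the degenerate germ $0_0\notin\cA_{\bC^2,0}$, so the only non-degenerate value of $F_*$ is $C$; thus $F_*(\cA_{X,\mathfrak{p}})$ is the single element $C$, the existence of at least one non-collapsed $C'$ being supplied by the selection argument below.

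The main work, and the main obstacle, is the case $d=2$, where I must prove $F_*(\cA_{X,\mathfrak{p}})=\cA_{\bC^2,0}$, i.e. that every curve germ $(D,0)\subset(\bC^2,0)$ equals $F(C')$ for some $(C',\mathfrak{p})\subset(X,\mathfrak{p})$. I would take $W:=F^{-1}(D)$, an analytic germ in $X$; openness of $F$ forces $F|_{W}$ to be germ-surjective onto $D$, so by irreducibility of $D$ some irreducible component $W'$ of $W$ dominates $D$ (its image is $1$-dimensional). The delicate part is to extract from the possibly high-dimensional $W'$ an honest complex curve germ that still surjects onto $D$ instead of collapsing into a fibre. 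I would do this by a generic linear section: cutting an $m$-dimensional $W'$ with a generic linear space $L\ni\mathfrak{p}$ of codimension $m-1$ yields a curve germ $W'\cap L$, while the central fibre $F^{-1}(0)\cap W'$, of dimension $\le m-1$, meets such a generic $L$ only in isolated points. Hence $W'\cap L$ is not contained in any fibre of $F$, so $F|_{W'\cap L}$ is non-constant and its image is an irreducible $1$-dimensional germ inside $D$, necessarily $D$ itself; taking $C'$ to be a component of $W'\cap L$ on which $F$ is non-constant gives $F(C')=D$. The same generic-section construction, applied to $X$ itself (whose image is $C$) in the case $d=1$, produces the non-collapsed curve needed to complete that case.

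I expect the two delicate points to be: (i) the analyticity of a well-defined image germ, which I would cite rather than establish here; and (ii) guaranteeing, through Bertini-type genericity together with the fibre-dimension inequality $\dim\bigl(F^{-1}(0)\cap W'\bigr)\le m-1$, that the generic slice genuinely dominates $D$ and is a \emph{complex-analytic} curve germ, rather than merely a real-analytic arc of the kind produced by the classical curve selection lemma; staying within $\cA_{\bC^2,0}$ is exactly what the statement requires.
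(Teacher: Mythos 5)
Your proposal is correct in substance, but it takes a genuinely different route from the paper. The paper's own proof is essentially a citation: the dichotomy ``open or irreducible curve'' was proved in \cite{JT-arkiv} for $X=\bC^N$, and the authors only remark that the argument goes through word for word after replacing the balls $B_\e$ by $X\cap B_\e$; the reformulation in terms of $F_*(\cA_{X,\mathfrak{p}})$ is treated as immediate from the definitions (the Introduction declares ``$F$ is open at $y$'' to be equivalent to $F_*(\cA_{X,\mathfrak{p}})=\cA_{Y,y}$). You instead import from \cite{JT-arkiv} only the analyticity of a well-defined image germ and then actually prove what the paper leaves implicit: irreducibility of the image germ by pulling a decomposition back to the irreducible representative $X\cap B_\e$, the dichotomy by dimension, the curve case via the fact that the non-constant image of an irreducible curve germ is an irreducible curve germ inside $(C,0)$, and---the real content---that openness implies $F_*(\cA_{X,\mathfrak{p}})=\cA_{\bC^2,0}$, by taking $W=F^{-1}(D)$ and slicing a dominating component with a generic linear space. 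That last statement is precisely the part of the Proposition that is not literally in \cite{JT-arkiv}, so your argument is more self-contained exactly where the paper is most terse.

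One step deserves more care, because the justification you give for it is not the right one. In ``openness of $F$ forces $F|_{W}$ to be germ-surjective onto $D$, so by irreducibility of $D$ some irreducible component $W'$ of $W$ dominates $D$'', irreducibility of $D$ is not what makes this work. What is needed is: (i) openness at \emph{every} radius $\e$ (this is exactly where well-definedness of the image germ enters), which yields $F\bigl(F^{-1}(D)\cap B_\e\bigr)= D\cap F(X\cap B_\e)\supseteq D\cap V_\e$ for all small $\e$; and (ii) the fact that, by local finiteness of the decomposition into irreducible components, for $\e$ small enough $F^{-1}(D)\cap B_\e$ coincides with the union of representatives of the finitely many irreducible components of the germ $\bigl(F^{-1}(D),\mathfrak{p}\bigr)$, so the covering of $D\cap V_\e$ cannot be carried by components of a representative that miss $\mathfrak{p}$; hence $F$ is non-constant on at least one germ component $W'$. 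With (i) and (ii) in place, your generic-slice argument is sound: $\dim\bigl(F^{-1}(0)\cap W'\bigr)\le m-1$ because $F$ is non-constant on the irreducible $W'$, a generic codimension-$(m-1)$ linear section through $\mathfrak{p}$ is then a curve germ meeting $F^{-1}(0)$ only at $\mathfrak{p}$, and its image is an irreducible curve germ inside $D$, hence equal to $D$. A cosmetic point: ``topologically irreducible'' should be read as ``not a union of two proper closed analytic subsets''; irreducibility in the classical topology is vacuous for Hausdorff spaces, though your pull-back argument proves the intended statement.
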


\begin{proof}
This has been stated and proved in \cite{JT-arkiv} for $X = \bC^N$,  of course without the interpretation
as $F_*(\cA_{X,\mathfrak{p}})$ that we have just introduced above.
In the case of a subspace $X\subset \bC^N$, after replacing the balls $B_{\e}\subset \bC^N$ centred at the origin by their intersections  $X\cap B_{\e}$, the proof goes word by word as that in \emph{loc.cit.} We may refer the reader to \S\ref{s:extension} for the more general setting where the target $(\bC^2,0)$ is replaced by an irreducible surface germ  $(Y,y)$.
 \end{proof}

 %
 \begin{proposition}\label{p:n-2dim}
Let $F: (X,\mathfrak{p})\to (Y,y)$  be a holomorphic map germ,  where  $(X,\mathfrak{p})$ and $(Y,y)$ are complex irreducible space germs, such that $\dim(X)=n$ and $\dim(Y)=p$, $n\geq p\ge 1$.  If the germ at $x$ of the fibre  $F^{-1}(y)$  has an irreducible component of dimension $n-p$,  then $F$ is locally open at $y$.
 \end{proposition}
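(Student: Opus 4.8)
The plan is to reduce the openness of $F$ at $y$ to the local (equidimensionality) version of Remmert's open mapping theorem, applied not at $\mathfrak{p}$ itself but at a carefully chosen nearby point of the distinguished fibre component. First I would isolate that component: let $Z$ be an irreducible component of the germ $(F^{-1}(y),\mathfrak{p})$ with $\dim Z = n-p$, whose existence is exactly the hypothesis. Write $(F^{-1}(y),\mathfrak{p}) = Z \cup Z'$, where $Z'$ is the union of the remaining components, some of which may have dimension $>n-p$.

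Since $Z$ is itself a component, $Z\cap Z'$ is a proper analytic subset of $Z$; hence the points $q\in Z$ lying off $Z'$ form a dense subset of $Z$, and in particular can be chosen arbitrarily close to $\mathfrak{p}$. At such a point $q$ the fibre germ is simply $(F^{-1}(y),q)=(Z,q)$, so that
$$\dim_q F^{-1}(y) = \dim Z = n-p = \dim_q X - \dim_{y} Y,$$
using that $X$, being irreducible, is pure $n$-dimensional, and that $\dim_y Y=p$. (When $n=p$ the component $Z$ reduces to $\{\mathfrak{p}\}$, so $\mathfrak{p}$ is isolated in the fibre and one simply takes $q=\mathfrak{p}$.) Now I would invoke the criterion that a holomorphic map with pure-dimensional source, whose fibre at $q$ attains the minimal dimension $\dim_q X-\dim_{F(q)}Y$, is open at $q$. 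This gives that $F$ maps every neighbourhood of $q$ in $X$ onto a neighbourhood of $F(q)=y$ in $Y$.

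To conclude local openness at $y$, fix any $\e>0$. Since $q$ may be taken inside the open set $B_{\e}\cap X$, there is a neighbourhood $U\subset B_{\e}\cap X$ of $q$ with $F(U)$ a neighbourhood of $y$; a fortiori $y$ is an interior point of $F(B_{\e}\cap X)$. As $\e>0$ is arbitrary, this is precisely the assertion that $F$ is locally open at $y$. The genericity of $q$ — lying on $Z$ but off $Z'$ and arbitrarily near $\mathfrak{p}$ — is what upgrades pointwise openness at $q$ into local openness of the germ.

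The one point requiring care, and the main obstacle, is the application of the open mapping criterion when the target germ $(Y,y)$ is only irreducible and possibly singular rather than smooth. The criterion in the form I use holds for $Y$ an irreducible reduced complex space of pure dimension $p$, and I would cite the general statement of Remmert's open mapping theorem (for instance in Kaup--Kaup or {\L}ojasiewicz); alternatively, if a smooth target is preferred, I would pass to a local embedding $(Y,y)\subset(\bC^M,0)$ and reduce to the smooth case, verifying that the fibre-dimension equality above is preserved under this reduction. Everything else is the standard dimension bookkeeping for the germ $(F^{-1}(y),\mathfrak{p})$ indicated above.
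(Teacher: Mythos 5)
Your strategy is sound and genuinely different from the paper's. The paper works at $\mathfrak{p}$ itself: it slices $X$ by $n-p$ generic hyperplanes through $\mathfrak{p}$ to get a $p$-dimensional irreducible $Z$ with $\mathfrak{p}$ isolated in $Z\cap F^{-1}(y)$, so that $F_{|Z}$ is a finite germ by \cite[Proposition, p.~63]{GR}, and then applies the open mapping theorem for finite maps. You instead pass to a generic point $q$ of the minimal component, off the union $Z'$ of the other components, and invoke an equidimensionality openness criterion there. Your relocation to $q$ is not cosmetic: when the fibre also has components of dimension $>n-p$ through $\mathfrak{p}$ (which the hypothesis permits, and which is exactly the situation in which the paper later uses the contrapositive, cf.\ Lemma \ref{l:finite}), every hyperplane slice through $\mathfrak{p}$ meets those components in positive dimension, so the claim that $\mathfrak{p}$ is isolated in $Z\cap F^{-1}(y)$ cannot hold there (try $F(x,y,z)=(xz,yz)$, whose fibre is $\{z=0\}\cup\{x=y=0\}$); choosing $q$ off $Z'$ eliminates the excess components before any dimension count is made.

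There is, however, a genuine gap at the step you yourself flag as the main obstacle. The criterion you invoke is false in the stated generality: for $Y$ merely irreducible, reduced and pure $p$-dimensional, minimal fibre dimension at $q$ does not imply openness at $q$. Counterexample: the normalization $\nu:\bC^2\to Y=\{v^2=u^2w\}\subset\bC^3$, $\nu(s,t)=(s,st,t^2)$; here $Y$ is irreducible and pure $2$-dimensional, all fibres of $\nu$ are finite, yet $\nu$ is not open at $(0,t_0)$ for $t_0\neq 0$, since its local image is a single branch of $Y$ at the double point $(0,0,t_0^2)$. The hypothesis under which Remmert's theorem is stated in the references you cite is \emph{local} irreducibility of the target at the image point; your application happens to be at a point whose image is $y$, where $(Y,y)$ is irreducible by assumption, so the instance you need is true --- but it is not delivered by the general statement you quote. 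Your fallback does not repair this either: composing with an embedding $(Y,y)\subset(\bC^M,0)$ yields a map into $\bC^M$ with $M>p$, which is never open, so there is no such reduction to a smooth target; and composing with a finite Noether projection $(Y,y)\to(\bC^p,0)$ gives an open composite, but openness of the composite does not transfer back to openness into $Y$ --- that transfer is precisely where the irreducibility of $(Y,y)$ must be used (again the umbrella example). The clean repair is to prove the pointwise criterion at $q$ by the paper's own method, which works there precisely because $q$ avoids $Z'$: slice by $n-p$ generic hyperplanes through $q$ to obtain a $p$-dimensional germ $(Z_q,q)$ with $q$ isolated in $Z_q\cap F^{-1}(y)$; then $F_{|Z_q}:(Z_q,q)\to(Y,y)$ is finite by \cite[Proposition, p.~63]{GR}, its image is an analytic germ of dimension $p$ by Remmert's finite mapping theorem, and irreducibility of $(Y,y)$ forces this image germ to be all of $(Y,y)$. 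Hence $F(U)$ is a neighbourhood of $y$ for every neighbourhood $U$ of $q$, and your concluding paragraph then goes through verbatim.
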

\begin{proof}
Let $B\subset X$ be a small enough open neighbourhood of $\mathfrak{p}$ where the holomorphic map $F$  is defined. Let $S$ be an irreducible
component of the germ $(F^{-1}(y),\mathfrak{p})$, with $\dim_{\mathfrak{p}} S = n-p$.  By repeatedly slicing with  generic hyperplanes at $\mathfrak{p}$, one gets a closed irreducible analytic subset $Z\subset B$ of dimension $p$ such that $\mathfrak{p}$ is  an isolated point of the intersection $Z\cap F^{-1}(y)$. By \cite[Proposition, page 63]{GR}, it then follows that there exist
 an open neighbourhood  $U$ of $\mathfrak{p}$ in $Z$ and an open neighbourhood $V$ of $y$ in $Y$, 
such that $F(U)\subset V$ and that the induced map $F_{|U}:U\to V$ is finite. 
 By the Open Mapping Theorem  (cf \cite[ page 107]{GR}), this implies 
that $F(U)$ is open. It follows that $F(B)$ is open, and since $B$ may be arbitrarily small, we deduce that $F$ is locally open at $y$.
\end{proof}

\begin{proposition}\label{p:curve}
Let $(X,\mathfrak{p})$ be an irreducible complex analytic set germ and let $F=(f,g):(X,\mathfrak{p})\to (\bC^2,0)$ be a nonconstant holomorphic map germ.
Then the image of  $F$ is a curve germ if and only if  $\jac (f,g)<2$ on $\Reg X$.
\end{proposition}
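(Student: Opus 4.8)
The plan is to read $\jac(f,g)\equiv 0$ on $\Reg X$ as the statement that the differential $\mathrm{d}F=(\mathrm{d}f,\mathrm{d}g)$ has rank $\le 1$ at every point of $\Reg X$ (all $2\times 2$ minors vanish, i.e. $\mathrm{d}f\wedge\mathrm{d}g\equiv 0$; for $n=2$ this is the usual Jacobian determinant). Both implications will rest on the principle that the dimension of the image germ of $F$ is controlled by this generic rank, together with the Open Mapping Theorem already invoked in the proof of Proposition \ref{p:n-2dim}.

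For the direct implication, suppose the image of $F$ is a curve germ $(C,0)$, so that $F(B_{\e}\cap X)\subseteq C$ for all small $\e$. First I would remove the proper analytic subset $F^{-1}(\Sing C)$, which is proper because $F$ is nonconstant, $X$ is irreducible, and $\Sing C$ is finite; this yields a dense open $U\subseteq\Reg X$. At every $x\in U$ one has $\mathrm{d}F_x(T_xX)\subseteq T_{F(x)}C$, a line, so $\rank\mathrm{d}F_x\le 1$ and all $2\times 2$ minors vanish on $U$. These minors are holomorphic on the connected manifold $\Reg X$, hence vanish identically by the identity principle; thus $\jac(f,g)\equiv 0$.

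For the converse, assume $\rank\mathrm{d}F\le 1$ on $\Reg X$. Then the generic rank equals $1$ (it is nonzero since $F$ is nonconstant), so the pullback $F^{*}\colon\cO_{\bC^2,0}\to\cO_{X,\mathfrak p}$, $h\mapsto h\circ F$, cannot be injective, since injectivity would force $F$ to be dominant onto $(\bC^2,0)$, i.e. generic rank $2$. I would pick $0\ne h\in\ker F^{*}$; then $h\circ F\equiv 0$, so $F(B_{\e}\cap X)\subseteq\{h=0\}$. Let $C$ be the closure of the image germ: as the continuous image of the irreducible $B_{\e}\cap X$ it is irreducible, it has dimension $\le 1$ since it lies in $\{h=0\}$, and dimension $\ge 1$ since $F$ is nonconstant. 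Hence $C$ is an irreducible curve germ, $F(B_{\e}\cap X)\subseteq C$, and $F$ is dominant onto $C$.

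The hard part will be to upgrade ``image contained in $C$'' to ``image is a \emph{well-defined} curve germ equal to $C$'', which is precisely the subtlety this paper is about. Here I would pass to normalizations. Let $\pi\colon(\hat X,\hat{\mathfrak p})\to(X,\mathfrak p)$ be the normalization, so that $\hat X$ is locally irreducible at $\hat{\mathfrak p}$, and let $\nu\colon(\bC,0)\to(C,0)$ be the normalization of the irreducible curve germ $C$, with $\nu^{-1}(0)=\{0\}$. Since $\hat X$ is normal and $F\circ\pi$ is dominant onto $C$, the universal property of normalization produces a unique holomorphic lift $\widehat F\colon(\hat X,\hat{\mathfrak p})\to(\bC,0)$ with $\nu\circ\widehat F=F\circ\pi$, and $\widehat F$ is nonconstant because $F\circ\pi$ is dominant. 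By the Open Mapping Theorem $\widehat F$ is open at $\hat{\mathfrak p}$, so $\widehat F(\hat B_{\e})$ is a neighbourhood of $0\in\bC$; consequently $F(B_{\e}\cap X)\supseteq\nu(\widehat F(\hat B_{\e'}))$ contains a neighbourhood of $0$ in $C$ for every small $\e$. Combined with $F(B_{\e}\cap X)\subseteq C$, this shows that the germ at $0$ of $F(B_{\e}\cap X)$ is independent of $\e$ and equals the irreducible curve germ $(C,0)$, which completes the proof.
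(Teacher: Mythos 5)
Your ``$\Rightarrow$'' direction and your normalization endgame are fine, but the converse has a genuine gap at its very first substantive step: the claim that $\jac(f,g)\equiv 0$ (generic rank $1$) forces the pullback $F^{*}:\cO_{\bC^2,0}\to\cO_{X,\mathfrak{p}}$ to have nontrivial kernel. Your justification --- ``injectivity would force $F$ to be dominant onto $(\bC^2,0)$, i.e.\ generic rank $2$'' --- is circular. Unwinding the definitions, non-injectivity of $F^{*}$ \emph{means} that the image of every small representative is contained in a proper analytic hypersurface germ $\{h=0\}$; so what you must prove is exactly ``generic rank $\le 1$ $\Rightarrow$ the image lies in a curve germ at $0$,'' which is the content of the proposition itself. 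The only easy implication here is ``generic rank $2$ $\Rightarrow$ $F^{*}$ injective'' (via openness on the rank-$2$ locus); the implication you actually use is its converse, and you give no argument for it. Note that the difficulty is real: the rank theorem only says that near a \emph{generic} point $x\in\Reg X$ the image of a small neighbourhood of $x$ is a $1$-dimensional submanifold near $F(x)$; gluing these local curves into a single analytic curve germ at the origin $0=F(\mathfrak{p})$ is precisely where all the work lies. This is where the paper invokes Remmert's theorems: upper semicontinuity of fibre dimension (Satz 16 of [Re1]) to get that \emph{every} fibre of $F$ has dimension exactly $n-1$, and then Remmert's constant-fibre-dimension theorem (Satz 18, quoted as Lemma \ref{l:nara}) to conclude that $F$ maps a fundamental system of neighbourhoods of $\mathfrak{p}$ onto analytic sets of dimension $1$. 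Your missing step is equivalent to the Remmert--Thimm analytic dependence theorem and cannot be waved through as a remark about dominance; either cite such a theorem or reproduce the fibre-dimension argument.

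Two smaller points. First, your phrase ``let $C$ be the closure of the image germ: as the continuous image of the irreducible $B_{\e}\cap X$ it is irreducible'' is not rigorous as stated: the image need not be analytic, and a closed subset of the curve $\{h=0\}$ need not be a union of branches. The correct argument is that $\{h=0\}$ has finitely many irreducible components $C_1,\dots,C_r$, so $B_{\e}\cap X=\bigcup_i F^{-1}(C_i)$ is a finite union of closed analytic subsets, and irreducibility of $B_{\e}\cap X$ forces $F(B_{\e}\cap X)\subset C_i$ for a single $i$ (the same $i$ for all small $\e$, since distinct branches meet only at $0$ and $F$ is nonconstant on every representative). Second, granting the existence of $h$, your normalization argument (lifting $F\circ\pi$ through $\nu:(\bC,0)\to(C,0)$ and applying the Open Mapping Theorem to $\widehat F$) is correct and is a genuinely different, and rather clean, way to obtain well-definedness of the image germ; the paper instead gets analyticity of the image directly from Remmert's Satz 18, so it never needs this step. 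In short: keep the endgame, but replace the dominance assertion by an actual proof that the image lies in a curve.
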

\begin{proof}
Since the implication ``$\Rightarrow$'' is obvious, we will focus on the proof of ``$\Leftarrow$''.

As before, we consider a suitable open neighbourhood $B\subset X$ of $\mathfrak{p}$ where the holomorphic map $F$  is defined, and some representatives of $X$ and of its regular part $\Reg X$. 
If $\rank(f,g)\equiv 0$, 
it follows that $(f,g)$ is constant on $\Reg X$ which is connected, and hence it is constant on $X$, contrary to our assumption. We deduce that 
$\rank(f,g)\equiv 1$ on an open dense subset $\Lambda \subset \Reg X$. 
Then the rank theorem  implies that $\dim_x(f,g)^{-1}\bigl(f(x),g(x)\bigr)=n -1$, for any $x\in \Lambda$.
 
Actually one has the inequality  $\dim_x(f,g)^{-1}(f(x),g(x))\geq n -1$ for any $x\in X$, due to the following result by Remmert:
 \begin{lemma}\cite[Satz 16]{Re0}
Let $F:X\to Y$ be a holomorphic map between complex spaces. Then any $a\in X$ has 
a neighbourhood $U_{a}\subset X$ such that, for any $x\in U_{a}$, one has:
$$ \dim_x F^{-1}(F(x))\leq \dim_a F^{-1}(F(a)).$$
  \fin
\end{lemma}
 However, one cannot have  $\dim_a(f,g)^{-1}(f(x),g(x)) > n-1$ for some $a\in X$, since this would contradict the fact that $X$ is irreducible of dimension $n$, and that $(f,g)$ is not constant. Consequently, the equality $\dim_x(f,g)^{-1}(f(x),g(x))= n -1$ holds  for any $x\in X$ and not only for $x\in \Reg X$.
 
To complete our proof, we only need the following classical result due to Remmert. 

\begin{lemma}\label{l:nara}\cite[Satz 18]{Re0}
Let $X$ and $Y$ be complex spaces such that $X$ is  pure dimensional, and let $F:X\to Y$ be a 
holomorphic map. If $r = \dim_{x} F^{-1}(F(x))$ is independent of $x\in X$, then any point $a\in X$
has a fundamental system of neighbourhoods $\{U_{i}\}$ such that $f(U_{i})$ is analytic at $F(a)$, of dimension $\dim X - r$.
\fin 
\end{lemma}
We may apply Lemma \ref{l:nara} to our setting since the dimension $r= n-1$ is independent of $x\in X$. This concludes the proof of Proposition \ref{p:curve}. 
\end{proof}

\medskip



 \section{The structure of  $F_*(\cA_{X,\mathfrak{p}})$} \label{s:blowup}
 
 From now on we assume that the non-constant holomorphic map germ $F:(X,\mathfrak{p})\to (\bC^2, 0)$ does not have a well-defined image as a set germ.
Let  $F^{-1}(0) := \bigcup_{j=1}^{k}H_{j}$ be the decomposition into irreducible components, understood as space germs with reduced structure. It follows by Proposition \ref{p:n-2dim} that   $\dim H_{j} = n-1$, where $n=\dim X$, thus all $H_{j}$ are  \emph{divisors} as germs at $\mathfrak{p}\in X$.

\medskip
 
We may assume without loss of generality that $\Sing(X)$ does not contain any irreducible component of the central fibre $F^{-1}(0)$.  This condition is achieved when replacing $X$ by its normalisation. Indeed, composing $F$  with the normalisation map does not change the image $F_*(\cA_{X,\mathfrak{p}})$ since normalisation maps are locally open.
The following equivalent condition will be needed in Lemma \ref{l:biholomorphism} and Definition \ref{d:order}:
 \begin{equation}\label{eq:dim-sing}
 \dim\bigl(\Sing(X)\cap F^{-1}(0)\bigr) \leq n-2.
 \end{equation}
 
 \medskip


 \subsection{The blow-up construction}\label{ss:blowup}
 
 Let $\tau_1:\widetilde\bC^2\to \bC^2$ be the blow-up of $0\in \bC^2$, with its exceptional divisor denoted by $E_{1}$. Let 
 $W:= X\times_{\bC^{2}} \widetilde\bC^2$ be the fibered product, endowed with the projections $\pr_1:W\to X$, and  $\pr_2:W\to \widetilde\bC^2$. 
 
 The restriction $\pr_{1|}: \pr_1^{-1}\bigl(X\setminus F^{-1}(0)\bigr) \to X\setminus F^{-1}(0)$ is a biholomorphism by construction. Let then $X_1$ denote the closure of $\pr_1^{-1}\bigl(X\setminus F^{-1}(0)\bigr)$
 in $W$. By definition,  $X_1$ is an irreducible component of $W$, while the other irreducible components are $H_j\times E_{1}$, for all $j=1, \ldots k$.
 Denoting by $\pi_1$ the restriction of $\pr_1$ to $X_1$,  and by $F_1$ the restriction of $\pr_2$ to $X_1$, 
  we obtain the following commutative diagram:
 \begin{equation}\label{diagram}
\xymatrix{
    X_1  \ar[d]_{\pi_1} \ar[rr]^{F_1}    &    &    \widetilde\bC^2\ar[d]_{\tau_1}  \\
         X \ar[rr]^F   &   &  \bC^2 ,
} 
\end{equation}
where $\pi_1$ is proper and surjective.
 \medskip
 
 Let $\Sigma:=\Sing X \bigcup\Sing \{f =0\}^{\red} \bigcup\Sing \{g =0\}^{\red}$, where the notation 
$Z^{\red}$ means the reduced structure of the analytic space $Z$. By definition $\Sigma$ is an analytic subset of $X$ and does not contains any irreducible component of $F^{-1}(0)$. 
 
\begin{lemma} \label{l:biholomorphism}
The restriction 
$${\pi_1}_{|}: \pi_{1}^{-1}\biggl(X\setminus \bigl(F^{-1}(0)\cap \Sigma\bigr)\biggr) \to X\setminus \bigl(F^{-1}(0)\cap \Sigma\bigr)$$
is a biholomorphism.
\end{lemma}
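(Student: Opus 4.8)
The plan is to establish the two defining properties of a biholomorphism separately: bijectivity of $\pi_1{}_{|}$ onto $X\setminus(F^{-1}(0)\cap\Sigma)$, and the local-biholomorphism property at every point of the source. Since this set decomposes as $(X\setminus F^{-1}(0))\cup(F^{-1}(0)\setminus\Sigma)$, and since $\pi_1$ is already a biholomorphism over $X\setminus F^{-1}(0)$ by the very construction of $X_1$ as the closure of the graph of $[f:g]$, the only work is to analyse $\pi_1$ over a point $x\in F^{-1}(0)\setminus\Sigma$. At such a point $X$ is smooth (as $\Sing X\subseteq\Sigma$), so I would fix local holomorphic coordinates $(z_1,\dots,z_n)$ on $X$ centred at $x$.

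The key step --- and the place where the standing assumption that $F$ has no well-defined image is really used --- is to show that the two reduced hypersurfaces $\{f=0\}^{\red}$ and $\{g=0\}^{\red}$ coincide near $x$. Indeed, since every component $H_j$ of $F^{-1}(0)$ is a divisor (Proposition \ref{p:n-2dim}), the germ $F^{-1}(0)$ is purely $(n-1)$-dimensional, whence $\dim_x F^{-1}(0)=n-1$. On the other hand, because $x\notin\Sigma$, each of $\{f=0\}^{\red}$ and $\{g=0\}^{\red}$ is a smooth, hence locally irreducible, hypersurface through $x$, and $F^{-1}(0)=\{f=0\}^{\red}\cap\{g=0\}^{\red}$ is contained in each of them. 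A closed analytic germ of dimension $n-1$ contained in an irreducible $(n-1)$-dimensional germ must equal it; applying this to both factors yields $\{f=0\}^{\red}=\{g=0\}^{\red}$ near $x$. I may therefore choose coordinates so that this common hypersurface is $\{z_1=0\}$, and since $\cO_{X,x}$ is a UFD, write $f=z_1^{a}u_1$ and $g=z_1^{b}u_2$ with units $u_1,u_2$ and integers $a,b\ge 1$; without loss of generality $a\le b$.

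With this normal form I would describe $X_1$ explicitly near $\pi_1^{-1}(x)$. Inside $X\times\bP^1$ with homogeneous coordinates $[u:v]$, the fibre product is cut out by $fv=gu$, that is $z_1^{a}(u_1 v - z_1^{b-a}u_2 u)=0$; discarding the component $\{z_1=0\}\times\bP^1$ (one of the $H_j\times E_1$) leaves $X_1=\{u_1 v = z_1^{b-a}u_2 u\}$. In the affine chart $u\ne 0$, with coordinate $t=v/u$, this is the graph $\{t=z_1^{b-a}u_2/u_1\}$ of a holomorphic function on $X$, and the unique point of $\pi_1^{-1}(x)$ lies in this chart (its $t$-coordinate being $0$ if $b>a$ and $u_2(x)/u_1(x)$ if $b=a$). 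Consequently $X_1$ is smooth there and $\pi_1$ is the projection of a graph, hence a local biholomorphism with single-point fibre over $x$. Combining this with the biholomorphism over $X\setminus F^{-1}(0)$ shows that $\pi_1{}_{|}$ is bijective and everywhere a local biholomorphism over $X\setminus(F^{-1}(0)\cap\Sigma)$, and a bijective local biholomorphism is a biholomorphism.

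I anticipate the dimension argument of the second paragraph to be the crux: it is exactly what rules out the transverse situation $\{f=0\}^{\red}\ne\{g=0\}^{\red}$, in which $[f:g]$ would have no common limit along different approach directions and $\pi_1^{-1}(x)$ would be all of $E_1$. The remaining local computation is routine, the only bookkeeping being the choice $a\le b$ and the verification that the relevant fibre point avoids the point at infinity of the chosen chart.
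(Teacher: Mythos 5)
Your proof is correct and follows essentially the same route as the paper: the same decomposition into $X\setminus F^{-1}(0)$ and the points of $F^{-1}(0)\setminus\Sigma$, the same local normal form $f=\psi^{p}\tilde f$, $g=\psi^{q}\tilde g$ (your $z_1^{a}u_1$, $z_1^{b}u_2$) at a smooth point of $H_j\setminus\Sigma$, and the same identification of $X_1$ locally as a graph over $X$, which is exactly the paper's explicit holomorphic inverse of $\pi_1$. The only difference is that you spell out the dimension/irreducibility argument showing $\{f=0\}^{\red}=\{g=0\}^{\red}=H_j$ near $x$ (using that all components of $F^{-1}(0)$ are divisors), a step the paper leaves implicit when it asserts the existence of $U'$ and $\psi$ with $\tilde f,\tilde g$ nowhere zero.
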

\begin{proof}
The map  ${\pi_1}_{|}: \pi_{1}^{-1}\bigl(X\setminus F^{-1}(0)\bigr) \to X\setminus F^{-1}(0)$
is by definition a biholomorphism.
What we need to prove in addition is that for any $j\in\{1,\dots,k\}$, and for any fixed point $x\in H_j\setminus \Sigma$, there exists a neighbourhood
 $U_{x}$ of $x$  in $X$ such that $\pi_1:\pi_1^{-1}(U_{x})\to U_{x}$ is a biholomorphism.  
 
 Let $U'\subset \Reg X\setminus  \Sigma$ be a connected open neighbourhood of our fixed point $x$, such that $H_j \cap U'=\{\psi=0\}$  for some holomorphic function $\psi\in\cO(U')$, and that 
  $f=\psi^p\tilde f$, $g=\psi^q\tilde g$ for some integers $p, q>0$,  where $\tilde f$ and $\tilde g$ have no zeroes on $U'$. We have the following local presentation:
$$U'\times_{\bC^{2}} \widetilde\bC^2 = \biggl\{\bigl(x,(u,v),[\zeta_0:\zeta_1]\bigr)\in X\times\bC^2\times\bP^1 \ \vert \ (f,g)(x)=(u,v),\ u\zeta_1=v\zeta_0\biggr\}.$$
In case $q\leq p$, we then get: 
$$ \pi_1^{-1}(U') = \biggl\{\bigl(x,(u,v),[\zeta_0:\zeta_1]\bigr)\in X\times\bC^2\times\bP^1  \ \vert \  (f,g)(x)=(u,v),\ 
\psi^{p-q}(x)\tilde f(x)\zeta_1=
\tilde g(x)\zeta_0\biggr\}.$$
This shows that the mapping:
\begin{equation}\label{rel:bih}
U'\ni x\longmapsto \biggl(x,\bigl(f(x),g(x)\bigr), \bigl[ \psi^{p-q}(x)\tilde f(x):\tilde g(x)\bigr]\biggr)
\end{equation}
is the holomorphic inverse of $\pi_1$. Thus our claim holds for $U_{x} = U'$. 

The case $p\leq q$ is completely similar.
\end{proof}

 Lemma \ref{l:biholomorphism} has the following immediate consequence:
\begin{corollary}\label{c:not-sing}
 For any $j= 1\ldots , k$, the inverse image $\pi_1^{-1}(H_j)$ has a unique irreducible component $\widetilde H_j$ such that $\pi_1(\widetilde H_j)=H_j$,  and that $\widetilde H_j\not\subset\Sing(X_1)$.  
 
 We will say that $\widetilde H_j$ is the \emph{proper transform of $H_j$ by $\pi_1$}.
 \fin
\end{corollary}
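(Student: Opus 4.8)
The plan is to exhibit the proper transform $\widetilde H_j$ concretely as the closure in $X_1$ of $\pi_1^{-1}(H_j\setminus\Sigma)$, and then read off existence, the domination property $\pi_1(\widetilde H_j)=H_j$, and the non-inclusion $\widetilde H_j\not\subset\Sing(X_1)$ directly from Lemma \ref{l:biholomorphism}, supplemented by one dimension count. First I would record that $H_j\not\subset\Sigma$: since $H_j$ is an irreducible divisor of dimension $n-1$ while $\Sigma$ contains no irreducible component of $F^{-1}(0)$, the intersection $H_j\cap\Sigma$ is a proper analytic subset, so $\dim(H_j\cap\Sigma)\le n-2$. Hence $H_j\setminus\Sigma$ is a dense connected (thus irreducible) open subset of $H_j$ of dimension $n-1$, and it lies in $\Reg X$ because $\Sigma\supset\Sing X$.

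Next I would invoke the Lemma. As $H_j\subset F^{-1}(0)$, we have $H_j\setminus\bigl(F^{-1}(0)\cap\Sigma\bigr)=H_j\setminus\Sigma$, so $H_j\setminus\Sigma$ sits inside the locus over which $\pi_1$ is a biholomorphism. Therefore $\pi_1^{-1}(H_j\setminus\Sigma)$ is biholomorphic to $H_j\setminus\Sigma$, hence irreducible of dimension $n-1$ and contained in $\Reg(X_1)$, since a biholomorphism carries regular points to regular points. I then set $\widetilde H_j := \overline{\pi_1^{-1}(H_j\setminus\Sigma)}$ in $X_1$; it is irreducible of dimension $n-1$, it meets $\Reg(X_1)$ so $\widetilde H_j\not\subset\Sing(X_1)$, and $\pi_1(\widetilde H_j)=\overline{H_j\setminus\Sigma}=H_j$. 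To see that $\widetilde H_j$ is genuinely an irreducible component of $\pi_1^{-1}(H_j)$, I would argue that any strictly larger irreducible subset of the closed set $\pi_1^{-1}(H_j)$ would have dimension $n$, hence equal $X_1$ (irreducible of dimension $n$), forcing $X=\pi_1(X_1)\subset H_j$, which is absurd.

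For uniqueness, which is the only step requiring a small argument, I would take any irreducible component $Z$ of $\pi_1^{-1}(H_j)$ with $\pi_1(Z)=H_j$ and show $Z=\widetilde H_j$. Because $\pi_1$ is bijective over $H_j\setminus\Sigma$, every fibre $\pi_1^{-1}(x)$ with $x\in H_j\setminus\Sigma$ is a single point lying in $\pi_1^{-1}(H_j\setminus\Sigma)$; since $Z$ dominates $H_j$ it meets each such fibre, whence $\pi_1^{-1}(H_j\setminus\Sigma)\subset Z$ and therefore $\widetilde H_j\subset Z$. As the generic fibre of $\pi_1|_Z\colon Z\to H_j$ over $H_j\setminus\Sigma$ is a single point, one has $\dim Z=\dim H_j=n-1=\dim\widetilde H_j$, and irreducibility of $Z$ forces $Z=\widetilde H_j$. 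Thus the dominating component is unique, and it is the one not contained in $\Sing(X_1)$, which is the claim.

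I expect the only genuinely non-formal point to be this last uniqueness argument: the observation that generic bijectivity of $\pi_1$ over $H_j\setminus\Sigma$ upgrades ``$Z$ dominates $H_j$'' to ``$Z$ contains the whole biholomorphic preimage $\pi_1^{-1}(H_j\setminus\Sigma)$'', together with the dimension bookkeeping that pins $\dim Z=n-1$. Everything else is a direct transcription of Lemma \ref{l:biholomorphism}, which is why the corollary is flagged as an immediate consequence.
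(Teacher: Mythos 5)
Your route is the one the paper intends (it states the corollary as an immediate consequence of Lemma \ref{l:biholomorphism}), and most of the steps are sound; in particular your uniqueness mechanism is exactly right: for $x\in H_j\setminus\Sigma$ the \emph{whole} fibre $\pi_1^{-1}(x)$ is a single point, so any irreducible component $Z$ of $\pi_1^{-1}(H_j)$ with $\pi_1(Z)=H_j$ must contain all of $A:=\pi_1^{-1}(H_j\setminus\Sigma)$. The genuine gap is at the definition of $\widetilde H_j$: you set $\widetilde H_j:=\overline{A}$ and assert that this closure ``is irreducible of dimension $n-1$'', i.e.\ that it is an irreducible \emph{analytic} subset of $X_1$. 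In the analytic category this is not automatic: closures of analytic subsets of open sets need not be analytic, and the standard tool that would grant it, the Remmert--Stein theorem, does not apply here, because the removed set $\pi_1^{-1}\bigl(F^{-1}(0)\cap\Sigma\bigr)$ can have dimension $n-1$, equal to $\dim A$. Indeed, although $\dim\bigl(F^{-1}(0)\cap\Sigma\bigr)\leq n-2$ by \eqref{eq:dim-sing}, the $\pi_1$-fibres over such points can be copies of $\bP^1$; this is precisely what happens in Example \ref{ex:n+1,1}, where $\pi_1^{-1}(0)\cong\bP^1$ over the singular point of $X$. Since both your existence claim (that $\widetilde H_j$ is a component at all) and the dimension comparison that closes your uniqueness argument lean on this unproved assertion, the proof as written has a hole exactly at the existence half of the corollary. (A minor point in the same step: the equality $\pi_1(\overline{A})=\overline{\pi_1(A)}$ uses that $\pi_1$ maps closed sets to closed sets, i.e.\ the properness of $\pi_1$ stated after diagram \eqref{diagram}; this should be cited.)

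The repair is cheap and avoids closures altogether. One has $A=V\cap W$, where $V:=\pi_1^{-1}(H_j)$ is analytic and $W:=\pi_1^{-1}\bigl(X\setminus(F^{-1}(0)\cap\Sigma)\bigr)$ is open, and $A$ is irreducible. The irreducible components $Z_i$ of $V$ form a locally finite family of closed analytic sets, and a proper analytic subset of an irreducible analytic set is nowhere dense; hence the sets $Z_i\cap A$, which are closed analytic in $A$ and cover $A$, cannot all be proper, so $A\subset Z_0$ for some component $Z_0$, and this component is unique because $A$ is open in $V$, hence open and nonempty in any component containing it. Define $\widetilde H_j:=Z_0$. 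Then $Z_0\supset A$ gives $Z_0\not\subset\Sing(X_1)$, and $\pi_1(Z_0)=H_j$ follows from properness of $\pi_1$ together with density of $H_j\setminus\Sigma$ in $H_j$. Your fibre argument now finishes uniqueness with no dimension count at all: if $\pi_1(Z)=H_j$ then $A\subset Z$, and $A$ is open in $Z$, so the analytic set $Z\cap Z_0$ has nonempty interior in $Z$, forcing $Z\cap Z_0=Z$, i.e.\ $Z\subset Z_0$, whence $Z=Z_0$ by maximality of components. (If you prefer to keep your dimension count, note that $\dim Z\leq n-1$ is automatic, since $V$ is a proper analytic subset of the irreducible $n$-dimensional $X_1$.)
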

Let us set the notation:
 $$ \cA_{E_{1}} := \bigsqcup_{\xi\in E_{1}}\cA_{\widetilde \bC^2,\xi}.$$
 
 \smallskip
 
The next lemma shows that our desired image $F_*(\cA_{X,\mathfrak{p}})$ of $F$  is the push-forward by $\tau_{1}$ of the subset 
${F_1}_*\bigl(\bigcup_{z\in \pi_1^{-1}(p)}\cA_{X_1,z}\bigr)$ of $\cA_{E_{1}}$.

\begin{lemma}\label{l:equal}
The following equality holds:
 $$(\tau_1\circ F_1)_*\biggl(\bigcup_{z\in \pi_1^{-1}(\mathfrak{p})}\cA_{X_1,z}\biggr)=F_*(\cA_{X,\mathfrak{p}}).$$
\end{lemma}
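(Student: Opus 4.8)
The plan is to prove the equality of the two subsets of $\cA_{\bC^2,0}$ by a double inclusion, using the commutativity of diagram \eqref{diagram} together with the biholomorphism property established in Lemma \ref{l:biholomorphism}. The key structural fact I would exploit is that $\pi_1$ is proper and surjective, so every irreducible curve germ $(C,\mathfrak{p})\subset (X,\mathfrak{p})$ admits a lift, and that $F = \tau_1\circ F_1$ on the level of the relevant curve germs because of \eqref{diagram}.

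For the inclusion $\supseteq$, I would take an arbitrary curve germ $(C,\mathfrak{p})\in\cA_{X,\mathfrak{p}}$ and produce a point $z\in\pi_1^{-1}(\mathfrak{p})$ together with a germ $\gamma\in\cA_{X_1,z}$ such that $\pi_1(\gamma)=C$. The natural candidate is the proper transform of $C$ under $\pi_1$: since $\pi_1$ is proper and restricts to a biholomorphism over $X\setminus F^{-1}(0)$ (Lemma \ref{l:biholomorphism} gives the stronger statement away from $F^{-1}(0)\cap\Sigma$), the strict transform $\widetilde C$ of $C\setminus\{\mathfrak{p}\}$ closes up to an irreducible curve germ meeting $\pi_1^{-1}(\mathfrak{p})$ at some point $z$, with $\pi_1(\widetilde C)=C$. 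Commutativity of \eqref{diagram} then yields $(\tau_1\circ F_1)_*(\widetilde C) = F_*\bigl(\pi_1(\widetilde C)\bigr) = F_*(C)$, so $F_*(C)$ lies in the left-hand set. The one case needing care is when $(C,\mathfrak{p})\subset F^{-1}(0)$, i.e. $F(C)=0_0$ is the degenerate germ; here any lift $\gamma$ has $F_1(\gamma)\subset E_1$ and $(\tau_1\circ F_1)_*(\gamma)=0_0=F_*(C)$, so the equality still holds.

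For the reverse inclusion $\subseteq$, I would start from $z\in\pi_1^{-1}(\mathfrak{p})$ and $\gamma\in\cA_{X_1,z}$ and set $C:=\pi_1(\gamma)$. Since $\pi_1$ is a finite (proper, with finite fibres over $\mathfrak{p}$) holomorphic map and $\gamma$ is an irreducible curve germ, its image $C=\pi_1(\gamma)$ is again an irreducible curve germ through $\mathfrak{p}$, hence $C\in\cA_{X,\mathfrak{p}}$ (when $\gamma$ is non-degenerate; if $\gamma=0_z$ both sides give $0_0$). Again by \eqref{diagram} one has $(\tau_1\circ F_1)_*(\gamma)=F_*(\pi_1(\gamma))=F_*(C)\in F_*(\cA_{X,\mathfrak{p}})$, which gives the inclusion.

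The main obstacle I anticipate is verifying that the strict-transform correspondence $C\leftrightarrow\widetilde C$ is genuinely well-defined and bijective at the level of \emph{curve germs} rather than of parametrised arcs, and in particular that taking a lift does not lose or multiply irreducible branches. This is precisely where Lemma \ref{l:biholomorphism} and Corollary \ref{c:not-sing} do the work: over $X\setminus(F^{-1}(0)\cap\Sigma)$ the map $\pi_1$ is a biholomorphism, so for a curve germ not contained in $F^{-1}(0)$ the strict transform is its unique irreducible lift and $\pi_1$ identifies them; for a curve germ contained in $F^{-1}(0)$, both $F_*(C)$ and every $(\tau_1\circ F_1)_*(\gamma)$ collapse to $0_0$, so the matching is trivial. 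I would therefore organise the argument so that the potentially delicate point—whether a given $\gamma$ can fail to arise as a strict transform—is rendered moot by noting that it suffices to compare the two \emph{images} $F_*$ and $(\tau_1\circ F_1)_*$, and these agree on every branch by the identity $F\circ\pi_1=\tau_1\circ F_1$ of \eqref{diagram}, independently of which particular lift one chooses.
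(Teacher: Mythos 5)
Your overall strategy coincides with the paper's: both directions are proved by double inclusion, with ``$\subseteq$'' coming from the commutativity of \eqref{diagram} and ``$\supseteq$'' coming from lifting a curve germ $(C,\mathfrak{p})$ to $X_1$. Indeed, your strict transform $\widetilde C$ is set-theoretically the same object as the paper's lift, which is described instead as the fibered product $C\times_{\gamma}\tilde\gamma$, where $\tilde\gamma$ is the proper transform under $\tau_1$ of the image curve $\gamma=F(C)$. The paper's description has a technical advantage you should note: since $\tilde\gamma$ meets $E_1$ in a single point $\xi$, it is immediate that the lift is a curve germ sitting at the single point $z=(\mathfrak{p},\xi)$, whereas your assertion that the closure of $\pi_1^{-1}(C\setminus\{\mathfrak{p}\})$ ``closes up to an irreducible curve germ meeting $\pi_1^{-1}(\mathfrak{p})$ at some point'' is left unjustified (it is true, but the cleanest proof is precisely the fibered-product/graph description).

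There is, however, one genuine error, in your ``$\subseteq$'' direction: you justify it by claiming that $\pi_1$ is ``finite (proper, with finite fibres over $\mathfrak{p}$)''. This is false exactly in the case that matters most for the paper: by Definition \ref{r:concl}(ii) (see also Lemma \ref{l:finite} and Example \ref{ex:n+1,1}), one may have $\pi_1^{-1}(\mathfrak{p})=\{\mathfrak{p}\}\times E_1\cong\bP^1$ --- this is the \emph{thick} case, the source of all thick branches. In that case a non-degenerate germ $\gamma\in\cA_{X_1,z}$ can be contained in $\{\mathfrak{p}\}\times E_1$ (for instance the germ of $\{\mathfrak{p}\}\times E_1$ itself at $z$), and then $\pi_1(\gamma)=\{\mathfrak{p}\}$ is \emph{not} a curve germ, so your dichotomy (non-degenerate $\gamma\mapsto$ curve germ $C$; $\gamma=0_z\mapsto 0_0$) misses these $\gamma$ and the step fails as written. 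The inclusion itself survives, but needs one extra observation: for such $\gamma$ one has $(\tau_1\circ F_1)_*(\gamma)=F_*(\pi_1(\gamma))=0_0$, and $0_0$ does belong to $F_*(\cA_{X,\mathfrak{p}})$ because, under the standing assumption of Section \ref{s:blowup} that the image of $F$ is not a well-defined set germ, every component $H_j$ of $F^{-1}(0)$ has dimension $n-1\geq 1$ and hence contains curve germs through $\mathfrak{p}$ that $F$ contracts. This one-line repair (which also cleanly settles the degenerate case $C\subset F^{-1}(0)$ that you treat in the ``$\supseteq$'' direction) makes your proof correct and essentially identical in route to the paper's.
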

\begin{proof}
\noindent  ``$\subset$'' follows from the commutativity of the diagram (\ref{diagram}). \\
 \noindent  ``$\supset$''. Working in the notations of curve germs, let $(\gamma, 0)\in F_*(\cA_{X,\mathfrak{p}})$, with $(\gamma, 0) \not = 0_{0}$,  and let
$(C,p)\in \cA_{X,\mathfrak{p}}$ such that $F(C)= \gamma$. 
Let $(\tilde \gamma, \xi)$ denote the proper transform of $\gamma$ by the blow-up $\tau_{1}$, where $\xi$ is a point of the exceptional divisor $E_{1}$. 

Then the fibered product $C\times_{\gamma}\tilde \gamma$ is a curve germ at $z:= (\mathfrak{p}, \xi)$ and therefore it is an element of  $\cA_{X_1,z}$, whose image by $\tau_1\circ F_1$ is precisely the curve germ $(\gamma,0)$.
\end{proof}

\medskip

Since  $X_1\subset X\times_{\bC^{2}} \widetilde\bC^2$ is an irreducible analytic subset, the intersection  
$X_1\cap(\{\mathfrak{p} \}\times E_{1}) = \pi_1^{-1}(\mathfrak{p})$ consists of finitely many points, or  it is of dimension 1 and therefore
$\pi_1^{-1}(\mathfrak{p}) = \{\mathfrak{p}\}\times E_{1}$. 

\smallskip


In order to distinguish between the images  ${F_1}_*\bigl(\cA_{X_1,(\mathfrak{p},\xi)}\bigr) \subset \cA_{\widetilde \bC^2,\xi}\subset \cA_{E_{1}}$  at the points $\xi$ along $E_{1}$, we introduce the following definition, which is equivalent to that given in  \S\ref{intro}:

\begin{definition}\label{d:points}
 A point $\xi\in E_{1}$ will be called:
 
 $\bullet$ \emph{determined-empty}, \  if $\xi\not\in F_1(\pi_1^{-1}(\mathfrak{p}))$.
 
 $\bullet$ \emph{determined-full}, \ \ \ \ if $\xi\in F_1(\pi_1^{-1}(\mathfrak{p}))$ and
${F_1}_*\bigl(\cA_{X_1,(\mathfrak{p},\xi)}\bigr) = \cA_{\widetilde \bC^2,\xi}$.

 $\bullet$ \emph{undetermined},  \ \ \ \ \ \ if  $\xi\in F_1(\pi_1^{-1}(\mathfrak{p}))$ and
${F_1}_*\bigl(\cA_{X_1,(\mathfrak{p},\xi)}\bigr) \subsetneq \cA_{\widetilde \bC^2,\xi}$.
\end{definition}

\medskip

We have denoted by  $k>0$ the number of  irreducible components of $F^{-1}(0)$, that we have assumed to be all of dimension $(n-1)$. We then have:
\begin{lemma}\label{l:finite}
There are at most $k$ undetermined points on $E_{1}$. 
\end{lemma}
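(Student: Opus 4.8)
The plan is to identify the undetermined points of $E_{1}$ with the images of those proper transforms $\widetilde H_{j}$ that get contracted by $F_{1}$ to a single point, and then to bound the number of such images by $k$. I would begin by recording two facts about the map $F_{1}:X_{1}\to\widetilde\bC^2$. First, since $F$ has no well-defined image, its image is not a curve germ, so by Proposition \ref{p:curve} we have $\jac(f,g)\not\equiv 0$ on $\Reg X$ and hence $F$ is dominant onto $\bC^{2}$; as $\tau_{1}$ and $\pi_{1}$ are birational, $F_{1}$ is dominant onto $\widetilde\bC^2$. Because $X_{1}$ is irreducible, hence pure $n$-dimensional, the fibre dimension theorem then ensures that every irreducible component of any fibre $F_{1}^{-1}(\xi)$ has dimension $\ge n-2$. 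Second, for each $j$ the commutativity of diagram \eqref{diagram} gives $\tau_{1}\bigl(F_{1}(\widetilde H_{j})\bigr)=F(H_{j})=0$, so $F_{1}(\widetilde H_{j})\subset E_{1}$; since $\widetilde H_{j}$ is irreducible and $E_{1}\cong\bP^{1}$, the image $F_{1}(\widetilde H_{j})$ is either a single point (we then call $\widetilde H_{j}$ \emph{contracted}) or the whole of $E_{1}$.

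Next I would take an undetermined point $\xi$, so that $z:=(\mathfrak{p},\xi)\in X_{1}$ and, by Definition \ref{d:points} together with the characterisation of openness recalled in the introduction, $F_{1}$ is not locally open at $z$. Applying Proposition \ref{p:n-2dim} to each irreducible component germ of $(X_{1},z)$ — all of dimension $n$ by pure-dimensionality — the failure of openness forces $(F_{1}^{-1}(\xi),z)$ to have no irreducible component of dimension exactly $n-2$. Combined with the lower bound $\ge n-2$ from the first paragraph, every component of $(F_{1}^{-1}(\xi),z)$ must have dimension $\ge n-1$. I would then pick one such component $V\ni z$. Using $F_{1}^{-1}(E_{1})=\pi_{1}^{-1}(F^{-1}(0))$ and the fact that $\pi_{1}$ is generically finite with fibres of dimension $\le 1$ over $\Sigma\cap F^{-1}(0)$ (which has dimension $\le n-2$ by \eqref{eq:dim-sing}), one gets $\dim\pi_{1}^{-1}(F^{-1}(0))=n-1$, whence $\dim V=n-1$ and $F_{1}(V)=\{\xi\}$.

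The heart of the argument is to show that $V=\widetilde H_{j}$ for some $j$. Since $\pi_{1}(V)\subset\bigcup_{j}H_{j}$, consider $\pi_{1}|_{V}$. If it were not generically finite, a generic fibre of $\pi_{1}|_{V}$ would be a positive-dimensional subset of some $\{x\}\times E_{1}$, hence all of $\{x\}\times E_{1}$ because $\dim E_{1}=1$; but then $F_{1}=\pr_{2}$ would map $V$ onto $E_{1}$, contradicting $F_{1}(V)=\{\xi\}$. Therefore $\pi_{1}|_{V}$ is generically finite, so $\dim\pi_{1}(V)=n-1$, which forces $\pi_{1}(V)$ to be one of the components $H_{j}$. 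By Lemma \ref{l:biholomorphism} and Corollary \ref{c:not-sing}, the unique $(n-1)$-dimensional component of $X_{1}$ dominating $H_{j}$ is $\widetilde H_{j}$, so $V=\widetilde H_{j}$, this $\widetilde H_{j}$ is contracted, and $\xi=F_{1}(\widetilde H_{j})$.

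It follows that every undetermined point is of the form $F_{1}(\widetilde H_{j})$ for a contracted proper transform, and since there are exactly the $k$ components $H_{1},\dots,H_{k}$, there are at most $k$ such images; this yields the asserted bound. I expect the delicate point to be the dichotomy for $V$ obtained from the generic fibre of $\pi_{1}|_{V}$: one must exploit both $\dim E_{1}=1$ and the identity $F_{1}=\pr_{2}$ to rule out the non-generically-finite case, and the dimension input \eqref{eq:dim-sing} is precisely what makes $\pi_{1}$ biholomorphic off a set of dimension $\le n-2$ and hence the identification $V=\widetilde H_{j}$ unambiguous.
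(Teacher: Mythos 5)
Your proof is correct and takes essentially the same approach as the paper's: undetermined implies not locally open, so by Proposition \ref{p:n-2dim} and the fibre-dimension lower bound every component of $F_1^{-1}(\xi)$ has dimension $n-1$; each such component is then identified with a proper transform $\widetilde H_j$ via Corollary \ref{c:not-sing}, and the assignment $\xi\mapsto j$ is injective because $F_1(\widetilde H_j)$ is a single point. The only difference is cosmetic: the paper bypasses your generic-finiteness detour by observing directly that $F_1^{-1}(\xi)\subset \pr_2^{-1}(\xi)=F^{-1}(0)\times\{\xi\}$, so each fibre component is literally some $H_j\times\{\xi\}$ and $\pi_1$ restricted to it is automatically injective.
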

\begin{proof}
 If $\xi\in E_{1}$ is an undetermined point then
$F_1:(X_1,(\mathfrak{p},\xi))\to (\widetilde\bC^2,\xi)$ is not open at $\xi$,  and therefore by Proposition \ref{p:n-2dim}, all irreducible components of  $F_1^{-1}(\xi)$ have dimension $n-1$. 

By definition, we have $F_1^{-1}(\xi)\subset \pr_2^{-1}(\xi)$, and $\pr_2^{-1}(\xi)=\bigl\{(x,\xi) \ \vert \ x\in X,\ F(x)=\tau_1(\xi)=0\bigr\}$, thus:
 $$ F_1^{-1}(\xi)\subset \pr_2^{-1}(\xi)= F^{-1}(0)\times\{\xi\}=\bigcup_{j=1}^k H_j \times\{\xi\}.$$ 
We deduce that there exists a subset $I_\xi\subset\{1,\dots,k\}$ such that $F_1^{-1}(\xi)= \bigcup_{j=1}^k H_j \times\{\xi\}$, and in particular $H_j \times\{\xi\} \subset X_1$ for any $j\in I_\xi$.  Combining with Corollary \ref{c:not-sing} and its notations, we get the equality $H_j\times\{\xi\} = \widetilde H_{j}$, for any $j\in I_\xi$, where $\widetilde H_{j}\subset X_{1}$ is the unique component of $\pi_1^{-1}(H_j)$ which projects onto $H_{j}$. This equality also tells that if $\xi \not= \xi'$ are two undetermined points, then 
$I_\xi$ and $I_{\xi'}$ are disjoint sets. Finally, since the disjoint union $\bigsqcup_{\xi} I_\xi$,  over all undetermined points $\xi\in  E_{1}$,  is a subset of $\{1, \ldots, k\}$, our claim follows.
\end{proof}

\begin{lemma}\label{l:onepoint}
If $\dim \pi_1^{-1}(\mathfrak{p}) =0$ then $\pi_1^{-1}(\mathfrak{p})$ is precisely one single point.
\end{lemma}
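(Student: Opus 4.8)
The plan is to reduce the statement to the \emph{connectedness} of the fibre $\pi_1^{-1}(\mathfrak{p})$. By hypothesis $\dim\pi_1^{-1}(\mathfrak{p})=0$, and since $\pi_1$ is proper the fibre $\pi_1^{-1}(\mathfrak{p})$ is compact; a compact $0$-dimensional analytic set is a finite set of points. A finite set is a single point exactly when it is connected, so it suffices to prove that $\pi_1^{-1}(\mathfrak{p})$ is connected. Conceptually this is an instance of Zariski's Main Theorem: the map $\pi_1:X_1\to X$ is a proper modification of the irreducible germ $X$ — it is proper and surjective, and by Lemma \ref{l:biholomorphism} it is a biholomorphism over the dense open set $X\setminus(F^{-1}(0)\cap\Sigma)$ — so its fibres are connected, and in particular $\pi_1^{-1}(\mathfrak{p})$ is connected.

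Rather than invoking that machinery, I would establish the connectedness by hand, using only results already available. Suppose for contradiction that $\pi_1^{-1}(\mathfrak{p})=\{z_1,\dots,z_m\}$ with $m\geq 2$, and pick pairwise disjoint open neighbourhoods $V_i\ni z_i$ in $X_1$. Since $\pi_1$ is proper, there is an open neighbourhood $U$ of $\mathfrak{p}$ in $X$, which we may take to be irreducible of dimension $n$, such that $\pi_1^{-1}(U)=\bigsqcup_{i=1}^m V_i'$ with $V_i':=V_i\cap\pi_1^{-1}(U)$; these are pairwise disjoint and open and closed in $\pi_1^{-1}(U)$, so each restriction $\pi_1|_{V_i'}:V_i'\to U$ is again proper. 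Moreover $\dim_{z_i}\pi_1^{-1}(\mathfrak{p})=0$, so by the upper semicontinuity of fibre dimension (Remmert, \cite[Satz 16]{Re0}, already used above) we may shrink $U$ so that every fibre of $\pi_1|_{V_i'}$ is $0$-dimensional; as a proper map with finite fibres is finite, each $\pi_1|_{V_i'}:V_i'\to U$ is then a finite map.

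Now $V_i'$ is open in the irreducible space $X_1$, hence pure of dimension $n$, so its image under the finite map $\pi_1|_{V_i'}$ is a closed analytic subset of $U$ of dimension $n$; since $U$ is irreducible of dimension $n$, this forces $\pi_1(V_i')=U$ for every $i$. Consider the dense open subset $U^\circ:=U\setminus(F^{-1}(0)\cap\Sigma)$. By Lemma \ref{l:biholomorphism} the restriction $\pi_1:\pi_1^{-1}(U^\circ)\to U^\circ$ is a biholomorphism, and since $U$ is irreducible and $F^{-1}(0)\cap\Sigma$ is a proper analytic subset, $U^\circ$ is connected; hence $\pi_1^{-1}(U^\circ)$ is connected. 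But $\pi_1^{-1}(U^\circ)=\bigsqcup_i\bigl(V_i'\cap\pi_1^{-1}(U^\circ)\bigr)$ is a disjoint union of open sets, each nonempty because $\pi_1(V_i')=U\supset U^\circ$. For $m\geq 2$ this contradicts connectedness, so $m=1$.

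The step I expect to be the main obstacle is the local analysis around the $0$-dimensional fibre: cleanly separating the finitely many points into clopen pieces on which $\pi_1$ is finite and each surjects onto a common irreducible neighbourhood $U$ of $\mathfrak{p}$. This is precisely the content that makes the connectedness of the fibres of a proper modification work, and it is where properness of $\pi_1$, the upper semicontinuity of fibre dimension, and the irreducibility of $X_1$ all enter; once it is in place, Lemma \ref{l:biholomorphism} yields the contradiction immediately.
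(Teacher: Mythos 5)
Your proof is correct, and its overall skeleton is the same as the paper's: assume $m\geq 2$, separate the fibre points by disjoint open sets, use properness of $\pi_1$ to find a neighbourhood $U$ of $\mathfrak{p}$ with $\pi_1^{-1}(U)$ contained in their disjoint union, and contradict the connectedness of the preimage of the locus over which $\pi_1$ is a biholomorphism (connected because $(X,\mathfrak{p})$ is irreducible). The genuine difference lies in how the two arguments show that each clopen piece meets that preimage. The paper gets this for free from the construction: $X_1$ is \emph{defined} as the closure of $\pi_1^{-1}\bigl(X\setminus F^{-1}(0)\bigr)$ in the fibered product, so that set is dense in $X_1$ and every nonempty open subset of $X_1$ --- in particular $V_j\cap\pi_1^{-1}(V)$ --- meets $\pi_1^{-1}\bigl(V\setminus F^{-1}(0)\bigr)$; no fibre-dimension analysis is needed, and only the by-construction biholomorphism over $X\setminus F^{-1}(0)$ is invoked, not Lemma \ref{l:biholomorphism}. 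You instead prove the stronger statement that each piece $V_i'$ surjects onto $U$, using Remmert's semicontinuity of fibre dimension \cite[Satz 16]{Re0}, the fact that proper plus finite fibres implies finite, the finite mapping theorem (image closed analytic of dimension $n$), and irreducibility of a small representative $U$. This costs noticeably more machinery, but it buys generality: your argument establishes connectedness of a zero-dimensional fibre for \emph{any} proper modification of an irreducible germ that is a biholomorphism over the complement of a nowhere dense analytic subset --- a Zariski-Main-Theorem type statement, as you note --- without ever using that $X_1$ is a closure. Both routes are sound; the paper's is shorter precisely because it exploits the specific definition of $X_1$, while yours would survive verbatim in settings where that definition is not available.
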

\begin{proof}
First, let us observe that $\pi_1^{-1}(\mathfrak{p})$ is non-empty since $F$ is a non-constant map. Next, let us assume by contradiction that $\pi_1^{-1}(\mathfrak{p})=\{(\mathfrak{p},\xi_1),\dots,(\mathfrak{p},\xi_m)\}$ with $m\geq 2$. We choose $V_1,\dots,V_m\subset X_1$ pairwise disjoint open neighbourhoods of these points,  and a connected open neighbourhood $V\subset X$ of $\mathfrak{p}$ such that $\pi_1^{-1}(V)\subset \bigsqcup_{j=1}^mV_j$.
The properness of $\pi_1$ insures the existence of such a neighbourhood $V$.  Since $V_{i}$ contains the point $(\mathfrak{p},\xi_i)$, it follows that $\pi_1^{-1}(V\setminus F^{-1}(0))\cap V_j\neq\emptyset$ for $j=1,\ldots ,m$. On the other hand, as $(X,\mathfrak{p})$ is irreducible, the complement 
$V\setminus F^{-1}(0)$ is also connected. Since the restriction ${\pi_1}_{|}: \pi_1^{-1}(V\setminus F^{-1}(0)) \to V\setminus F^{-1}(0)$ is an isomorphism (by the definition of the square diagram \eqref{diagram}), it follows that $\pi_1^{-1}(V\setminus F^{-1}(0))$ is also connected. This contradicts the fact that it is covered by the disjoint union of open sets $\bigsqcup_{j=1}^mV_j$, $m\ge 2$.
\end{proof}

By the above results, we get the following definition that has been announced in \S\ref{intro}:
\begin{definition}[\emph{\textbf{Thick and thin branches}}]\label{r:concl}  \
\begin{itemize}
 \item[(i).]  If $\dim \pi_1^{-1}(\mathfrak{p}) =0$,  then all the points of $E_{1}$ are determined-empty except of \emph{one single point} (cf Lemma \ref{l:onepoint}),  which is either  undetermined or determined-full. 
 
 We say that $E_{1}$ yields a \emph{thin branch} $\cE^{\thin}_{1} :=  \bigsqcup_{\xi\in E_{1}}0_{\xi}$.
 
 \medskip
 
\item[(ii).]  If $\pi_1^{-1}(\mathfrak{p}) = \{\mathfrak{p}\}\times E_{1}$, then all the points of $E_{1}$ are determined-full except of finitely many, which are undetermined (cf Lemma \ref{l:finite}).  There is \emph{at least one} such undetermined point;  if none, then the image of $F$ is open at $0\in \bC^{2}$.  

We say that $E_{1}$ yields a \emph{thick branch} $\cE^{\thick}_{1} :=  \bigsqcup_{\xi\in E_{1}}\cA_{S, \xi}$.
\end{itemize}
\end{definition}

\medskip

\subsection{The algorithm}\label{ss:algo}

Let $F : (X,\mathfrak{p}) \to (\bC^{2}, 0)$ be a non-constant map germ such that its image is not a well-defined set germ. 

\medskip

\noindent \textbf{Step 1}.
We apply to $F$ the above blow-up construction.  We will use the following new notations: $S_0 :=\bC^2$, $S_1:=\widetilde \bC^2$.
The following situations occur:

\smallskip

 \noindent $\bullet$  If the exceptional divisor $E_{1}$ has no undetermined point, then  $E_{1}$ yields a thin branch with a single determined-full point $\xi$, and all the other points are determined-empty. The algorithm stops at this branch, that we will call \emph{a terminal branch}\footnote{It follows that all terminal branches are thin.}.
The image $F_*(\cA_{X,\mathfrak{p}})$ is completely determined by the push-down by $\tau_{1}$ of this thin branch, since by Lemma \ref{l:equal}:
$$F_*(\cA_{X,\mathfrak{p}}) = {\tau_{1}}_{*}(\cA_{\widetilde \bC^{2},\xi}).$$ 

\medskip

 \noindent $\bullet$  If the exceptional divisor $E_{1}$ has a finite set of undetermined points (cf Lemma \ref{l:finite}), let it be $\{\xi_1,\dots, \xi_\ell\}$, $\ell \ge 1$,  then either $E_{1}$ supports a thick branch, or $E_{1}$ supports a thin branch and in this case $\ell =1$. 

We then apply the blow-up construction (i.e. diagram \eqref{diagram}) simultaneously to 
 the following restrictions  of $F_{1}$ as map germs:
 
 $${F_{1}}_{|} : (X_{1}, (\mathfrak{p},\xi_r)) \to (S_{1}, \xi_{r}), \ \ \mbox{ for } r= 1, \ldots, \ell . $$
 
 New exceptional divisors $E_{2,1}, \ldots , E_{2,\ell}$ are created by these blow-up constructions, one 
 for each point $\xi_{1}, \ldots ,\xi_{\ell}$.
 
 \
 
 \noindent \textbf{Step 2.} We are now examining $E_{2,r}$, for $r= 1, \ldots, \ell$. Only two cases may occur:
 
 \smallskip
 \noindent $\bullet$     If $E_{2,r}$ has no undetermined points,  then the algorithm stops at this thin branch as a terminal branch, like in Step 1 above.
 Remark that the single determined-full point $\xi$ of this thin branch occurs precisely at the intersection point of $E_{2,r}$ with the proper image of $E_{1}$ by the blow-up $\tau_{2,r}$ of the smooth surface germ $(S_{1}, \xi_{r})$.
 
 \smallskip
   
 \noindent $\bullet$   If $E_{2,r}$ has undetermined points, then we continue as in Step 1 above   with new blow-up constructions at those undermined points.

\
 
The result of the algorithm is a connected blossom tree $\cB_{F}$ with thick and thin branches, where the ramifications occur at the steps where the branches
 have more than one undetermined points.  We shall prove in the next section that this tree $\cB_{F}$ is finite.
 
One then recovers the image $F_*(\cA_{X,\mathfrak{p}})$ by pushing down the branches by the blow-up maps $\tau_{m}$ in the inverse order of the creation steps. To know in which order to do this, we may use either:
 
 $(1).$ the picture of $\cB_{F}$ as a tree with the two distinct types of branches,   each one labelled by  the step rank  at which it is created, 
 
 or 
 
 $(2).$ the dual diagram,  where the thin branches are represented by the symbol  $\odot$
 and the thick branches by the symbol $\CIRCLE$,
and each one labelled by  the  step rank at which it is created.   From the divisor created at the first step (which can be a thin branch  or a thick branch) leave several arrows (namely a number of $\# I_{\xi} \le k$ arrows) pointing to the branches created at step 2,  which are thick or thin; and so on.
 
In our examples  we have chosen the dual diagram representation.

\section{Proof of the finiteness of the algorithm}\label{s:finiteness}

We will prove here  by \emph{reductio ad absurdum} that the algorithm is finite, in the hypothesis that $F = (f,g)$ does not have a well-defined image as a set germ. \\
Assuming the  contrary means that the tree $\cB_{F}$  contains some infinite chain of  branches, which corresponds to a chain
of diagrams as follows:

\begin{equation}\label{diagram2}
\xymatrix{
 : \ar[d] & & : \ar[d] \\
\hspace{-0.8cm} \mathfrak{p}_{m+1}\in  X_{m+1}  \ar[d]_{\pi_{m+1}} \ar[rr]^{F_{m+1}}   &    &    S_{m+1}\ar[d]_{\tau_{m+1}}\ni\xi_{m+1} \stackrel{\chi_{m+1}}{\longmapsto} 0 \in \bC^{2}\hspace{-3.75cm}  \\
\hspace{-0.8cm} \mathfrak{p}_m\in X_m  \ar[d]_{\pi_{m}} \ar[rr]^{F_m}    &    &   S_m \ar[d]_{\tau_m}\ni\xi_m \ \stackrel{\chi_{m}}{\longmapsto} \ 0 \in \bC^{2}\hspace{-3.50cm}  \\
: & & : } 
\end{equation}
where  $\mathfrak{p}_m= (\mathfrak{p}_{m-1}, \xi_{m})$ and $\xi_m$ is one of the undetermined points 
$\xi_{m,s}$, for $s\in \{1, \ldots , \ell_{m}\}$, and where
 $\tau_{m+1}$ denotes the blow-up of $S_m$ at $\xi_m$.

\

Since $\xi_m$ is undetermined, by definition the image of $F_m$ is not open at $\xi_m$,  and therefore all irreducible components 
of $F_m^{-1}(\xi_m)$ have dimension $n-1$ (cf Proposition \ref{p:n-2dim}). 

We have seen in the proof of Lemma \ref{l:finite} that the projection $\pi_m$ establishes an injective correspondence 
between the set of irreducible components of $F_m^{-1}(\xi_m)$ and the set of irreducible components of 
 $F_{m-1}^{-1}(\xi_{m-1})$ and so, step-by-step downwards by the  by the successive projections $\pi_{m-i}$ in diagram \eqref{diagram2}, an injective correspondence
 between the set of irreducible components of $F_m^{-1}(\xi_m)$ and the set of irreducible components of $F^{-1}(0) = \cup_{j=1}^{k}H_j$.  
 
 With these notations, let us define:
 $$I^{(m)}:=\bigl\{ j\in \{1,\ldots, k\} \  \vert \  H_j \text{ is the image of some component of }F_m^{-1}(\xi_m)\bigr\}.$$
We therefore have a decreasing sequence of non-empty subsets of indices:
$$I^{(m)}\subset I^{(m-1)}\subset  \cdots \subset I^{(0)}:= \{1,\ldots, k\}$$ 
which stabilises, i.e.,  there exists $m_{0}\ge 0$ such that $I^{(m)} = I^{(m_{0})}$ for all $m\ge m_{0}$.

 For some fixed $j_0\in\bigcap_{m\geq 1} I^{(m)}$, let us then denote by  $H_{j_0}^{(m)}$  the unique 
irreducible component of $F_m^{-1}(\xi_m)$ which projects onto $H_{j_0} = H_{j_0}^{(0)}$ by the sequence of $\pi_{j}$ in diagram \eqref{diagram2}.   Modulo the identifications by the projections  $\pi_{j}$, we then have:
\begin{equation}\label{eq:stab}
 H_{j_0}\simeq H_{j_0}^{(1)} \simeq \cdots \simeq H_{j_0}^{(m)} \simeq \cdots.
\end{equation}

\subsection{Local orders along divisors}\label{ss:orders}

We choose a local coordinate system at $\xi_m$, i.e. on an open set $V\subset S_m$ containing $\xi_m$, such that $\xi_{m}$
becomes the origin, and thus  $(S_{m}, \xi_{m})$ identifies with $(\bC^{2}, 0)$ through a chart map (which is denoted by $\chi_{m}$ in diagram \eqref{diagram2}, see also \eqref{eq:localchart} for $\chi_{m+1}$).
In this local coordinate system  one has  $F_{m} = (f_{m}, g_{m})$, and our divisor $H_{j_0}^{(m)}$ is an irreducible component of $(f_{m}, g_{m})^{-1}(0)$. 

\begin{definition}\label{d:order}
 Let $x\in H_{j_0}^{(m)}\m \bigl( \Sing X_m \cup \Sing H_{j_0}^{(m)}\bigr)$, and remark that   $H_{j_0}^{(m)}\m \bigl( \Sing X_m \cup \Sing H_{j_0}^{(m)}\bigr)$ is not empty by \eqref{eq:dim-sing}.
Let $\psi$ be a holomorphic function which defines  $H_{j_0}^{(m)}$ in some neighbourhood $U\subset X_{m}$ of $x$.
One then writes $f_{m}=\psi^p\tilde f_{m}$, $g_{m}=\psi^q\tilde g_{m}$, with $\tilde f_{m}\vert  H_{j_0}^{(m)} \not\equiv 0$ and $\tilde g_{m}\vert  H_{j_0}^{(m)} \not\equiv 0$.  We say that $p:=\ord_{H_{j_0}^{(m)}}f_{m}$,  and $q := \ord_{H_{j_0}^{(m)}}g_{m}$.
Let us also set:
 $$\ord_{H_{j_0}^{(m)}}F:=\min\{p,q\}.$$
\end{definition}
By standard arguments,  $\ord_{H_{j_0}^{(m)}}F$  does not depend on the choices, and it is invariant under local biholomorphisms, in particular independent of the charts.

\begin{lemma}\label{l:order}
$\ord_{H_{j_0}^{(m+1)}}F_{m+1}\leq \ord_{H_{j_0}^{(m)}}F_m$.
\end{lemma}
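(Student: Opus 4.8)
The plan is to compute both orders at a generic point of the relevant divisor and to read off the effect of the blow-up in suitable local coordinates. Write $d := \ord_{H_{j_0}^{(m)}}F_m = \min\{p,q\}$, with $p = \ord_{H_{j_0}^{(m)}}f_m$ and $q = \ord_{H_{j_0}^{(m)}}g_m$ computed, as in Definition \ref{d:order}, at a point $x \in H_{j_0}^{(m)}\setminus\bigl(\Sing X_m \cup \Sing H_{j_0}^{(m)}\bigr)$ — a set which is non-empty by \eqref{eq:dim-sing}. After possibly interchanging the roles of $f_m$ and $g_m$, I may assume $p \le q$, so that $d = p$; near $x$ one then has $f_m = \psi^p\tilde f_m$ and $g_m = \psi^q\tilde g_m$ with $\tilde f_m,\tilde g_m$ non-vanishing along $H_{j_0}^{(m)}$.

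First I would fix local coordinates $(u,v)$ on $(S_m,\xi_m)\cong(\bC^2,0)$ with $F_m=(f_m,g_m)$, and describe $\tau_{m+1}$ in the affine chart $u=u_1,\ v=u_1 t$ in which $t=v/u$ is a coordinate. On $X_{m+1}$ the relation $\tau_{m+1}\circ F_{m+1} = F_m\circ\pi_{m+1}$ gives $u_1\circ F_{m+1} = f_m$ and $t\circ F_{m+1} = g_m/f_m$, the latter extending holomorphically across the divisor and taking a finite value $t_0$ on $H_{j_0}^{(m+1)}$, because $g_m/f_m = \psi^{q-p}\tilde g_m/\tilde f_m$ remains bounded as $\psi\to 0$ (this is where $p\le q$ is used). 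Since $H_{j_0}^{(m+1)}$ is contained in the fibre $F_{m+1}^{-1}(\xi_{m+1})$, the point $\xi_{m+1}$ is the common image $F_{m+1}\bigl(H_{j_0}^{(m+1)}\bigr)$ and hence lies in this chart, at $t=t_0$. The chart map $\chi_{m+1}$ then has second coordinate $t_1 = t-t_0$, so that $F_{m+1} = (f_{m+1},g_{m+1})$ with $f_{m+1} = f_m$ and $g_{m+1} = g_m/f_m - t_0$.

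The decisive step is to evaluate $\ord_{H_{j_0}^{(m+1)}} f_{m+1}$. Here I would invoke the level-$(m+1)$ analogue of Lemma \ref{l:biholomorphism} together with Corollary \ref{c:not-sing}: the projection $\pi_{m+1}$ restricts to a biholomorphism from a neighbourhood of a generic point of $H_{j_0}^{(m+1)}$ onto a neighbourhood of a generic point of $H_{j_0}^{(m)}$, and carries a local equation $\psi$ of $H_{j_0}^{(m)}$ to a local equation $\tilde\psi = \psi\circ\pi_{m+1}$ of $H_{j_0}^{(m+1)}$. Pulling back $f_m=\psi^p\tilde f_m$ then yields $f_{m+1} = \tilde\psi^{\,p}(\tilde f_m\circ\pi_{m+1})$ with $\tilde f_m\circ\pi_{m+1}$ non-vanishing along $H_{j_0}^{(m+1)}$, whence $\ord_{H_{j_0}^{(m+1)}} f_{m+1} = p = d$. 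Consequently
\[
\ord_{H_{j_0}^{(m+1)}} F_{m+1} = \min\bigl\{\ord_{H_{j_0}^{(m+1)}} f_{m+1},\ \ord_{H_{j_0}^{(m+1)}} g_{m+1}\bigr\} \le \ord_{H_{j_0}^{(m+1)}} f_{m+1} = d,
\]
which is the asserted inequality.

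I expect the only real obstacle to be the coordinate bookkeeping of the blow-up: one must work in the chart dictated by the inequality $p\le q$ (the symmetric chart $s=u/v$ handling the case $p>q$), and one must check that $\xi_{m+1}$ genuinely lies at a finite value of the chosen chart coordinate. This is precisely what forces the smaller of the two orders to become the first coordinate function $f_{m+1}$, so that the minimum cannot increase. Everything else is the routine fact that orders along a divisor are preserved by the generic biholomorphism $\pi_{m+1}$, for which hypothesis \eqref{eq:dim-sing} supplies the required generic smooth points.
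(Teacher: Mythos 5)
Your proof is correct and follows essentially the same route as the paper's: both pass to a generic point of the divisor where $\pi_{m+1}$ is a biholomorphism (Lemma \ref{l:biholomorphism}), argue that $\xi_{m+1}$ must lie in the blow-up chart determined by the component of smaller order (because the ratio of the larger-order component to the smaller-order one extends holomorphically across the divisor), and conclude that the smaller-order component pulls back unchanged, so the new minimum of orders cannot exceed the old one. The only difference is the symmetric normalisation --- the paper assumes $q\leq p$ and keeps $g_m$ as the preserved coordinate, while you assume $p\leq q$ and keep $f_m$ --- which is immaterial.
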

\begin{proof}
For $U$ as in Definition \ref{d:order}, let $U'\subset U$ be an open set such that $U'\m \bigl( \Sing X_m \cup \Sing H_{j_0}^{(m)}\bigr) \neq\emptyset$,  and that $\tilde f(x)\neq 0$ and $\tilde g(x)\neq 0$ for 
all $x\in U'$. It follows from Lemma \ref{l:biholomorphism}  that the restriction $\pi_{m+1} \vert  \pi_{m+1}^{-1}(U')$ is a biholomorphism onto  $U'$. 

Without loss of generality, we may suppose that $\ord_{H_{j_0}^{(m)}}g_{m}= q\leq p =\ord_{H_{j_0}^{(m)}}f_{m}$. Therefore $\ord_{H_{j_0}^{(m)}}F_m=q$.

Let $V\subset S_m$ be the open set containing $\xi_m$  defined in the beginning of  \S \ref{ss:orders}.
We have that $\tau_{m+1}^{-1}(V)$ is the blow-up of $V\subset S_m$ at $0$, thus it can be written as 
$$\tau_{m+1}^{-1}(V)=\biggl\{\bigl((u,v),[\lambda_0:\lambda_1]\bigr)\in V\times\bP^1 \ \vert \ u\lambda_1=v\lambda_0\biggr\}.$$

For  $x\in U'$ we have:  
$$(F_{m+1}\circ \pi_{m+1}^{-1})(x)=\biggl( \bigl(f_m(x),g_m(x)\bigr), \bigl[\psi_m^{p-q}(x)\tilde f_m(x):\tilde g_m(x)\bigr]\biggr) .$$ 
Since by our assumption,  $F_{m+1} \vert H_{j_0}^{(m+1)} \equiv \xi_{m+1}$, and $H_{j_0}^{(m+1)}$ is the proper transform of $H_{j_0}^{(m)}$ by $\pi_{m+1}$, we get the identity
$\bigl[ \psi_m^{p-q}(x)\tilde f_m(x):\tilde g_m(x)\bigr] \equiv [\alpha_0 :\alpha_1 ]$  on $U'\cap H_{j_0}^{(m)}$, where 
$\xi_{m+1}= \bigl( (0,0),[\alpha_0:\alpha_1]\bigr)$, and $\alpha_1\neq 0$
since $\tilde g_m$ is not identically zero on $H_{j_0}^{(m)}$. 

We will
use the following local chart at  the point $\xi_{m+1}= \bigl( (0,0),[\frac{\alpha_0}{\alpha_1}:1]\bigr)$:
\begin{equation}\label{eq:localchart}
 \tau_{m+1}^{-1}(V)\ni \biggl( (u,v),[\lambda_0:\lambda_1]\biggr) 
\stackrel{\chi_{m+1}}{\longmapsto} \biggl(\frac{\lambda_0}{\lambda_1} -\frac{\alpha_0}{\alpha_1},v\biggr) \in \bC^{2},
\end{equation}
which identifies $\xi_{m+1}$ to the origin $(0,0)\in \bC^{2}$.

In this chart we then get:
\begin{equation}\label{eq:F_{m+1}}
  (F_{m+1}\circ \pi_{m+1}^{-1})(x)=\left(\frac{\psi_m^{p-q}(x)\tilde f_m(x)-(\alpha_0/\alpha_1)\tilde g_m}{\tilde g_m},\psi^q \tilde g_m\right),
\end{equation}
and we conclude  that $\ord_{H_{j_0}^{(m)}}(F_{m+1}\circ \pi_{m+1}^{-1})\leq q$. Therefore we have:
$$\ord_{H_{j_0}^{(m+1)}}F_{m+1}\leq q = \ord_{H_{j_0}^{(m)}}F_m,$$ 
which ends the proof of our lemma.
\end{proof}


\subsection{Stabilisation of the order}\label{ss:orderstab}

Lemma \ref{l:order} implies that there exists a rank $m_0 >0$ after which the order stabilises, namely:
$$\ord_{H_{j_0}^{(m)}} F_m=\ord_{H_{j_0}^{(m_0)}}F_{m_0}, \ \forall m\geq m_0.$$ 

From now on we will fix the minimal  integer $m_0$ with the above stability property, and we write the above results for $m=m_0$.
Let us use the chart $\chi_{m_0}$ to write  $\chi_{m_0}\circ F_{m_0} = (f_{m_0},g_{m_0})$. We may assume, without loss of generality,
that  $\ord_{H_{j_0}^{(m_0)}}(g_{m_0})\leq \ord_{H_{j_0}^{(m_0)}}(f_{m_0})$;
we therefore also have  $\ord_{H_{j_0}^{(m_0)}}(F_{m_0})=\ord_{H_{j_0}^{(m_0)}}(g_{m_0})$.

\medskip
By using the local charts $\chi_{m}$ defined at \eqref{eq:localchart}, let us set, for any $m\ge m_{0}$:
\begin{equation}\label{eq:setU}
\cU_{m} :=  \bigl(X_{m}\setminus g_{m}^{-1}(0)\bigr) \bigcup \Bigl(\Reg X_{m}\setminus \overline{g_{m}^{-1}(0)\setminus H_{j_0}^{(m)}}\,\Bigr).
\end{equation}

Under these notations, we state the following refinement of Lemma \ref{l:biholomorphism}:
 \begin{lemma}\label{l:biholomorphism-2}
If $\ord_{H_{j_0}^{(m_0)}}(g_{m_0})\leq \ord_{H_{j_0}^{(m_0)}}(f_{m_0})$, then the restriction
$${\pi_{m_0+1}}_{\vert} : \pi_{m_0+1}^{-1}(\cU_{m_{0}}) \to \cU_{m_{0}}$$
is a biholomorphism.
\end{lemma}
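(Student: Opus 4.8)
The plan is to extend Lemma~\ref{l:biholomorphism} from the set $X_{m_0}\setminus\bigl(F_{m_0}^{-1}(\xi_{m_0})\cap\Sigma\bigr)$ to the larger set $\cU_{m_0}$ by exploiting the stabilisation of the order, which is the one new ingredient available at $m=m_0$ and not at a generic step. Recall that $\cU_{m_0}$ consists of two pieces: the locus where $g_{m_0}\neq 0$, and the regular locus from which we have removed the closure of the other branches of $g_{m_0}^{-1}(0)$, keeping only $H_{j_0}^{(m_0)}$. On the first piece, $F_{m_0}$ is nonzero, so $\pi_{m_0+1}$ is already a biholomorphism by the very construction of the square diagram \eqref{diagram} (the blow-up is an isomorphism away from the central fibre). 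Thus the whole content is to prove the biholomorphism property over the second piece, i.e.\ over points $x$ lying on the divisor $H_{j_0}^{(m_0)}$ itself but not on any other component of $g_{m_0}^{-1}(0)$, and not in $\Sing X_{m_0}$.

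First I would fix such a point $x\in\Reg X_{m_0}\cap H_{j_0}^{(m_0)}$ lying on no other branch of $g_{m_0}^{-1}(0)$, choose a local defining function $\psi$ for $H_{j_0}^{(m_0)}$ near $x$, and write $f_{m_0}=\psi^{p}\tilde f$, $g_{m_0}=\psi^{q}\tilde g$ with $q\le p$ as in Definition~\ref{d:order}, where now $\tilde f$ and $\tilde g$ are nonvanishing on a small neighbourhood because $x$ avoids the other branches of the zero set. The key point is that the order stabilisation forces $H_{j_0}^{(m_0+1)}$ to be the proper transform of $H_{j_0}^{(m_0)}$ and not to be mapped into $\xi_{m_0+1}$ with a drop; concretely, the explicit formula \eqref{eq:F_{m+1}} computed in the proof of Lemma~\ref{l:order} shows the second coordinate of $F_{m_0+1}\circ\pi_{m_0+1}^{-1}$ equals $\psi^{q}\tilde g$, whose order is exactly $q$, matching $\ord_{H_{j_0}^{(m_0)}}F_{m_0}$. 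Then I would write down the explicit inverse of $\pi_{m_0+1}$ over the neighbourhood of $x$, exactly in the style of \eqref{rel:bih}: the map
$$x\longmapsto\Bigl(x,\bigl(f_{m_0}(x),g_{m_0}(x)\bigr),\bigl[\psi^{p-q}(x)\tilde f(x):\tilde g(x)\bigr]\Bigr),$$
which is well defined and holomorphic precisely because $\tilde g(x)\neq 0$ on this neighbourhood, so the second projective coordinate never vanishes and the point of $\bP^{1}$ is determined holomorphically.

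The main obstacle, and the reason the hypothesis $\ord g_{m_0}\le\ord f_{m_0}$ is indispensable, is controlling the fibre of $\pi_{m_0+1}$ over points where $g_{m_0}$ vanishes: a priori the fibred product could acquire extra components of $\pi_{m_0+1}^{-1}(x)$ lying in the other irreducible components $H_{j}\times E$ of $W$, which would destroy injectivity. Ruling this out is exactly where one needs $x$ to avoid $\overline{g_{m_0}^{-1}(0)\setminus H_{j_0}^{(m_0)}}$, so that near $x$ the only branch of $g_{m_0}^{-1}(0)$ present is $H_{j_0}^{(m_0)}$, and where one needs $q\le p$ so that $\tilde g$ (not $\tilde f$) is the surviving nonzero factor determining the single chart point on $E$. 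I would therefore argue that over such $x$ the fibre $\pi_{m_0+1}^{-1}(x)$ reduces to the single point prescribed by the formula above, and that the stabilisation of the order guarantees no further blow-up behaviour interferes. Finally, gluing the two pieces of $\cU_{m_0}$ and invoking that a proper bijective holomorphic map between reduced complex spaces, one of which is normal (here $\Reg X_{m_0}$), is a biholomorphism, concludes the proof.
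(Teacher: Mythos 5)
Your proposal follows essentially the same route as the paper's proof: run the argument of Lemma \ref{l:biholomorphism} for $F_{m_0}$, treat separately the locus $\{g_{m_0}\neq 0\}$ (where $\pi_{m_0+1}$ is a biholomorphism by the very construction of the fibered product) and the points of $\Reg X_{m_0}$ on $H_{j_0}^{(m_0)}$ avoiding the other branches of $g_{m_0}^{-1}(0)$, and over the latter exhibit the explicit holomorphic section
$x\mapsto\bigl(x,(f_{m_0}(x),g_{m_0}(x)),[\psi^{p-q}(x)\tilde f(x):\tilde g(x)]\bigr)$,
which is well defined because $\tilde g\neq 0$ there and because the hypothesis gives $q\le p$. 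This is exactly the mechanism of the paper, which simply re-reads the proof of Lemma \ref{l:biholomorphism} and observes that, in the case $q\le p$, the inverse map \eqref{rel:bih} needs only the nonvanishing of $\tilde g$.

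One sentence of yours is, however, wrong and, taken literally, would empty the lemma of its content: you assert that \emph{both} $\tilde f$ and $\tilde g$ are nonvanishing near $x$ ``because $x$ avoids the other branches of the zero set''. Membership in $\cU_{m_0}$ excludes only $\overline{g_{m_0}^{-1}(0)\setminus H_{j_0}^{(m_0)}}$; a point of $\cU_{m_0}$ may perfectly well lie on $\overline{f_{m_0}^{-1}(0)\setminus H_{j_0}^{(m_0)}}$, where $\tilde f$ vanishes. Such points are precisely the gain of Lemma \ref{l:biholomorphism-2} over Lemma \ref{l:biholomorphism} (compare the set \eqref{eq:bihol3} with the set \eqref{eq:setU}); if both $\tilde f$ and $\tilde g$ were nonvanishing you would merely be restating Lemma \ref{l:biholomorphism}. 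Fortunately your operative steps use only $\tilde g\neq 0$ --- as they must --- so the proof is repaired simply by deleting the claim on $\tilde f$. Two further, harmless, inaccuracies: the stabilisation of the order plays no role inside this lemma (only the inequality $q\le p$ does; stabilisation is what later guarantees the same inequality at levels $m>m_0$, so that the lemma can be iterated), and the closing appeal to ``a proper bijective holomorphic map onto a normal space is biholomorphic'' is superfluous, since the displayed section is already the holomorphic inverse of $\pi_{m_0+1}$ on its image.
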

\begin{proof}
The proof of Lemma  \ref{l:biholomorphism} holds for $F_{m_{0}}$ instead of $F$, and shows that the map $\pi_{m_0+1}$ is a biholomorphism on the set:
\begin{equation}\label{eq:bihol3}
 \pi_{m_0+1}^{-1}\biggl(  \bigl(X_{m_0}\setminus g_{m_0}^{-1}(0)\bigr) \bigcup \Bigl(\Reg X_{m_0} \setminus\bigl(\overline{f_{m_0}^{-1}(0)\setminus H_{j_0}^{(m_0)}}\cup \overline{g_{m_0}^{-1}(0)\setminus H_{j_0}^{(m_0)}}\bigr)\Bigr) \biggr) .
\end{equation}

 Since
$\ord_{H_{j_0}^{(m_0)}}(g_{m_0})\leq \ord_{H_{j_0}^{(m_0)}}(f_{m_0})$, we are indeed in the situation ``$q\leq p$'' 
considered in the proof of Lemma  \ref{l:biholomorphism}. Comparing the set in \eqref{eq:bihol3} with the set $\cU_{m_{0}}$ defined in \eqref{eq:setU},
by following the proof of Lemma  \ref{l:biholomorphism} we notice  that the map \eqref{rel:bih} is well-defined and holomorphic if $\tilde g_{m_0}$ has no zeroes on some small enough neighbourhood $U_{x}$ of any fixed point $x$ of the open set  $\Reg X_{m_0}\setminus \overline{g_{m_0}^{-1}(0)\setminus H_{j_0}^{(m_0)}}$, which is just the property of this set by definition.
\end{proof}

We will tacitly use the chart $\chi_{m_0}$ defined on an open neighbourhood $V\subset S_{m_{0}}$ of $\xi_{m_{0}}$, as introduced in the beginning of \S\ref{ss:orders}. Let us set:
$$\Omega :=\biggl(\bigl(X_{m_0}\setminus g_{m_0}^{-1}(0)\bigr)\bigcup \Bigl( \Reg X_{(m_0)}\setminus \overline{g_{m_0}^{-1}(0)\setminus H_{j_0}^{(m_0)}}\Bigr)\biggr)
\bigcap F_{m_0}^{-1}(V),$$
The open set $\Omega$ is included in the set  $\cU_{m_{0}}$ defined  in Lemma \ref{l:biholomorphism-2}, hence
$\pi_{m_0+1}$ is biholomorphic on $\pi_{m_0+1}^{-1}(\Omega)$.

\begin{lemma}\label{l:level1}
Let  $\xi_{m_{0}+1}= \bigl( (0,0),[\gamma_{1}:1]\bigr)$, where $\gamma_{1}=\frac{\alpha_0}{\alpha_1}$.
There exists a holomorphic function $h_1$ defined on $\Omega$ such that 
$$f_{m_0}=\gamma_{1}g_{m_0}+h_1g_{m_0}$$
 and  that ${h_1}_{|H_{j_0}^{(m_0)}}\equiv 0$.
\end{lemma}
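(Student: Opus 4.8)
The plan is to read off the decomposition directly from the stabilisation of the order that was established in \S\ref{ss:orderstab}. Recall that $m_0$ is chosen so that $\ord_{H_{j_0}^{(m)}}F_m$ is constant for all $m\ge m_0$, and in particular $\ord_{H_{j_0}^{(m_0+1)}}F_{m_0+1}=\ord_{H_{j_0}^{(m_0)}}F_{m_0}=q$. The key observation is that formula \eqref{eq:F_{m+1}} from the proof of Lemma \ref{l:order} computes $F_{m_0+1}\circ\pi_{m_0+1}^{-1}$ explicitly in the chart $\chi_{m_0+1}$: its first component is
$$\frac{\psi_{m_0}^{\,p-q}\tilde f_{m_0}-\gamma_{1}\tilde g_{m_0}}{\tilde g_{m_0}},$$
where $\gamma_1=\alpha_0/\alpha_1$ is exactly the value making $\xi_{m_0+1}$ the origin. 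First I would note that $\ord_{H_{j_0}^{(m_0)}}f_{m_0}=p$ and $\ord_{H_{j_0}^{(m_0)}}g_{m_0}=q$, so writing $f_{m_0}=\psi_{m_0}^{p}\tilde f_{m_0}$ and $g_{m_0}=\psi_{m_0}^{q}\tilde g_{m_0}$ on $\Omega$, the quantity $\psi_{m_0}^{\,p-q}\tilde f_{m_0}/\tilde g_{m_0}$ is precisely $f_{m_0}/g_{m_0}$.

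The heart of the argument is then to interpret what the stabilisation of the order forces on this first component. If the order had strictly dropped, the first component of $F_{m_0+1}\circ\pi_{m_0+1}^{-1}$ would vanish to order $<q$ along $H_{j_0}^{(m_0)}$; since the order does \emph{not} drop, this first component must itself be divisible by $g_{m_0}$ (equivalently, it vanishes to order at least $q=\ord_{H_{j_0}^{(m_0)}}g_{m_0}$ along the divisor). Concretely, I would set
$$h_1:=\frac{f_{m_0}-\gamma_1 g_{m_0}}{g_{m_0}}=\frac{f_{m_0}}{g_{m_0}}-\gamma_1,$$
which is a priori only meromorphic, and show it is in fact holomorphic on $\Omega$. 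On $X_{m_0}\setminus g_{m_0}^{-1}(0)$ holomorphy is immediate since $g_{m_0}$ is invertible there. On the remaining piece $\Reg X_{m_0}\setminus\overline{g_{m_0}^{-1}(0)\setminus H_{j_0}^{(m_0)}}$, I would work in the local presentation near a generic point $x\in H_{j_0}^{(m_0)}$, where $g_{m_0}=\psi_{m_0}^{q}\tilde g_{m_0}$ with $\tilde g_{m_0}$ non-vanishing, and write $f_{m_0}/g_{m_0}-\gamma_1=\bigl(\psi_{m_0}^{\,p-q}\tilde f_{m_0}-\gamma_1\tilde g_{m_0}\bigr)/\tilde g_{m_0}$; this is manifestly holomorphic because $\tilde g_{m_0}$ does not vanish there, exactly as guaranteed by the definition of $\Omega$ and by Lemma \ref{l:biholomorphism-2}. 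The identity $f_{m_0}=\gamma_1 g_{m_0}+h_1 g_{m_0}$ then holds by construction on the dense open set where $g_{m_0}\ne0$, hence on all of $\Omega$ by continuity.

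It remains to verify that $h_1$ vanishes identically along $H_{j_0}^{(m_0)}$. Here I would use precisely that the order is stabilised: evaluating the numerator $\psi_{m_0}^{\,p-q}\tilde f_{m_0}-\gamma_1\tilde g_{m_0}$ on $H_{j_0}^{(m_0)}$, and recalling from the proof of Lemma \ref{l:order} that $\bigl[\psi_{m_0}^{\,p-q}\tilde f_{m_0}:\tilde g_{m_0}\bigr]\equiv[\alpha_0:\alpha_1]$ on $U'\cap H_{j_0}^{(m_0)}$, we get $\psi_{m_0}^{\,p-q}\tilde f_{m_0}\equiv(\alpha_0/\alpha_1)\tilde g_{m_0}=\gamma_1\tilde g_{m_0}$ there, so the numerator vanishes on $H_{j_0}^{(m_0)}$ while $\tilde g_{m_0}$ does not. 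Hence ${h_1}_{|H_{j_0}^{(m_0)}}\equiv0$, as claimed. I expect the main obstacle to be the holomorphy of $h_1$ across the divisor $H_{j_0}^{(m_0)}$: one must be careful that the meromorphic quotient $f_{m_0}/g_{m_0}-\gamma_1$ really extends holomorphically, and the clean way to see this is the factored expression above, which is legitimate exactly on $\Omega$ because the troublesome part of $g_{m_0}^{-1}(0)$ (the closure of the components other than $H_{j_0}^{(m_0)}$) has been removed. The case $p\ge q$ treated here is the one reduced to by the normalisation made in \S\ref{ss:orderstab}; the symmetric case is analogous and need not be repeated.
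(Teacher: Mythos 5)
Your construction of $h_1$ is the same as the paper's, and the two points you isolate (holomorphy of $h_1$ across the divisor, and its vanishing along it) are exactly the right ones; but your argument for holomorphy has a genuine gap. You verify that $\tilde g_{m_0}$ is non-vanishing, and hence that $h_1=\bigl(\psi_{m_0}^{p-q}\tilde f_{m_0}-\gamma_1\tilde g_{m_0}\bigr)/\tilde g_{m_0}$ is holomorphic, only \emph{near a generic point} of $H_{j_0}^{(m_0)}$, and then treat this as holomorphy on the whole piece $\Reg X_{m_0}\setminus \overline{g_{m_0}^{-1}(0)\setminus H_{j_0}^{(m_0)}}$. That set, however, also contains non-generic points of the divisor, for instance points of $\Sing H_{j_0}^{(m_0)}$ lying in $\Reg X_{m_0}$, where $H_{j_0}^{(m_0)}$ has several local branches. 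At such a point the presentation $g_{m_0}=\psi_{m_0}^{q}\tilde g_{m_0}$ with $\tilde g_{m_0}$ a unit is \emph{not} ``guaranteed by the definition of $\Omega$'': the definition of $\Omega$ only ensures that locally $g_{m_0}^{-1}(0)\subset H_{j_0}^{(m_0)}$, and one must still argue that every local branch of the irreducible hypersurface $H_{j_0}^{(m_0)}$ occurs in $g_{m_0}$ with the same multiplicity $q$ (constancy of the multiplicities of the divisor of $g_{m_0}$ along an irreducible component), or else invoke a removable-singularity argument: the locus where your generic computation fails is contained in a proper analytic subset of $H_{j_0}^{(m_0)}$, hence has codimension at least $2$, so the second Riemann extension theorem extends $h_1$ across it. Either patch is standard, but as written the step is a non sequitur. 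A smaller completion of the same kind is needed for the vanishing: the identity $\psi_{m_0}^{p-q}\tilde f_{m_0}\equiv \gamma_1\tilde g_{m_0}$ is only available on $U'\cap H_{j_0}^{(m_0)}$, a neighbourhood of a generic point, so you should pass to all of $H_{j_0}^{(m_0)}\cap\Omega$ by the identity theorem on the irreducible set $H_{j_0}^{(m_0)}$, using that $h_1$ is by then known to be holomorphic.

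It is worth seeing how the paper's own proof makes these issues disappear: it never factors through $\psi_{m_0}$ at this stage. Near $H_{j_0}^{(m_0)}\cap\Omega$ it identifies $h_1$ with $f_{m_0+1}\circ\pi_{m_0+1}^{-1}$, which is holomorphic simply because $F_{m_0+1}$ is a holomorphic map and $\pi_{m_0+1}$ is a biholomorphism over $\cU_{m_0}\supset\Omega$ (Lemma \ref{l:biholomorphism-2}); this agrees with $(f_{m_0}-\gamma_1 g_{m_0})/g_{m_0}$ on the dense open set where $g_{m_0}\neq 0$, so the two definitions glue into a single holomorphic $h_1$ on $\Omega$. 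The vanishing also comes out directly: $f_{m_0+1}\equiv 0$ on $H_{j_0}^{(m_0+1)}$ because, by the chain hypothesis \eqref{eq:stab}, the proper transform $H_{j_0}^{(m_0+1)}$ is still contracted by $F_{m_0+1}$ to $\xi_{m_0+1}$. Note that it is this fact, not the stabilisation of the order as you assert, that forces $h_1$ to vanish along the divisor; order stabilisation is used elsewhere (to guarantee that Lemma \ref{l:biholomorphism-2} can be re-applied at the next level of the tower). In short: same $h_1$, same conclusion, but your by-hand holomorphy argument needs the multiplicity (or extension-theorem) step spelled out, whereas the paper's use of the lifted holomorphic map avoids the issue entirely.
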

\begin{proof}
 For $m=m_{0}$, and by using the chart $\chi_{m_0+1}$ defined at \eqref{eq:localchart}, we have the presentation
 $F_{m_{0}+1}=(f_{m_{0}+1},g_{m_{0}+1})$, and 
 the equality \eqref{eq:F_{m+1}}  can be written on a neighbourhood $U$ of $H_{j_0}^{(m_0)}\cap\Omega$  as:
 $$(F_{m_{0}+1}\circ \pi_{m_{0}+1}^{-1})(x)=\left(\frac{f_{m_{0}}(x)-\gamma_1g_{m_{0}}}{g_{m_{0}}},g_{m_{0}}\right),$$
 where :
\begin{equation}\label{eq:defgf}
 f_{m_{0}+1}\circ \pi_{m_0+1}^{-1} =\frac{f_{m_{0}} -\gamma_1g_{m_{0}}}{g_{m_{0}}}\ \ \mbox{ and } \ \ 
g_{m_{0}+1}\circ \pi_{m_0+1}^{-1}=g_{m_{0}}
\end{equation}
are holomorphic functions on $U$. 

Moreover, the function $h_{1} :=\frac{f_{m_{0}} -\gamma_1g_{m_{0}}}{g_{m_{0}}}$ is holomorphic on the entire $\Omega$, since $g_{m_{0}}$ has no zero  on  $\Omega \setminus H_{j_0}^{(m_0)}$.
We then  have the identity $f_{m_0}-\gamma_{1}g_{m_0}= h_1g_{m_0}$. We also get ${h_1}_{|H_{j_0}^{(m_0)}}\equiv 0$ since 
 $f_{m_0+1}\equiv 0$ on $H_{j_0}^{(m_0+1)}$.  
\end{proof} 

The identity $g_{m_{0}+1}\circ \pi_{m_0+1}^{-1}=g_{m_{0}}$ implies that  $\ord_{H_{j_0}^{(m_0)}}(g_{m_0})=\ord_{H_{j_0}^{(m_0+1)}}(g_{m_0+1})$. 
The definition of  $m_0$ in the beginning of \S \ref{ss:orderstab} tells that
$\ord_{H_{j_0}^{(m_0+1)}}(F_{m_0+1})=\ord_{H_{j_0}^{(m_0)}}(F_{m_0})$ and therefore
$\ord_{H_{j_0}^{(m_0+1)}}(F_{m_0+1})=\ord_{H_{j_0}^{(m_0+1)}}(g_{m_0+1})$.

For the sets  $\cU_{m}$ defined in Lemma \ref{l:biholomorphism-2}, by their construction we have:
$$\pi_{m_0+1}^{-1}(\cU_{m_{0}}) = \cU_{m_{0}+1},$$
and since 
$$\ord_{H_{j_0}^{(m_0+1)}}(g_{m_0+1})\leq \ord_{H_{j_0}^{(m_0+1)}}(f_{m_0+1}),$$
we get the following upper lever version of Lemma \ref{l:biholomorphism-2}:
\medskip

\noindent
\textbf{[Lemma \ref{l:biholomorphism-2}]$_{m_0+1}$}   \ \ {\it
The restriction
${\pi_{m_0+2}}_{\vert} : \pi_{m_0+2}^{-1}(\cU_{m_{0}+1}) \to \cU_{m_{0}+1}$
is a biholomorphism.}

\medskip
\noindent
which implies that  the restriction:
$${\pi_{m_0+2}}_{\vert} : (\pi_{m_0+1}\circ\pi_{m_0+2})^{-1}(\Omega) \to \Omega$$
 is a biholomorphism.

\bigskip
By using the local chart $\chi_{m_0+2}$ (defined at \eqref{eq:localchart})  at some undetermined point $\xi_{m_0+2}\in S_{m_0+2}$,  like we did before for $S_{m_0+1}$, we may write $F_{m_0+2}=(f_{m_0+2},g_{m_0+2})$, where:
$$f_{m_0+2}=\frac{f_{m_0+1}\circ \pi_{m_0+2}-\gamma_2 (g_{m_0+1}\circ \pi_{m_0+2})}{g_{m_0+1}\circ \pi_{m_0+2}},\ \ 
g_{m_0+2}=g_{m_0+1}\circ \pi_{m_0+2}.$$
for some $\gamma_2\in \bC$ defined by the choice of $\xi_{m_0+2}$,  and  where $f_{m_0+2} \vert H_{j_0}^{(m_0+2)}\equiv 0$ by \eqref{eq:stab}.

By using the presentation \eqref{eq:defgf}  for $f_{m_0+1}$,  we get:   

\begin{equation}\label{eq:rel}
 \left\{ \begin{split}
&f_{m_0+2}=\frac{f_{m_0}- \gamma_{1} g_{m_0}-\gamma_2 g_{m_0}^2}{g_{m_0}^2}\circ \pi_{m_0+1}\circ \pi_{m_0+2}
\\
&g_{m_0+2}=g_{m_0}\circ \pi_{m_0+1}\circ \pi_{m_0+2}
\end{split}  \right.
\end{equation}

The identities \eqref{eq:rel} hold on a neighbourhood of $(\pi_{m_0+1}\circ \pi_{m_0+2})^{-1}\bigl(H_{j_0}^{(m_0)}\cap\Omega \bigr)$ and, as explained in the step before, the function $\frac{f_{m_0}- \gamma_{1} g_{m_0}-\gamma_2 g_{m_0}^2}{g_{m_0}^2}$ is  holomorphic  on the entire set $\Omega$.

Setting $h_2:= \frac{f_{m_0}- \gamma_{1} g_{m_0}-\gamma_2 g_{m_0}^2}{g_{m_0}^2}$, the above relations \eqref{eq:rel} imply the following upgraded version of Lemma \ref{l:level1}:

\medskip
\noindent 
\textbf{[Lemma \ref{l:level1}]$_{2}$}   \ \ \ $\exists\ \gamma_1, \gamma_2\in\bC$ and there is a holomorphic function $h_2: \Omega \to \bC$   such that  $h_2 \vert H_{j_0}^{(m_0)}\equiv 0$  and that
$f_{m_0}=\gamma_1g_{m_0}+\gamma_2g_{m_0}^2+h_2g_{m_0}^2$.  \fin

\medskip

By induction, we thus get the higher upgraded version of Lemma \ref{l:level1}, for any $s\ge 1$:

\medskip
\noindent 
\textbf{[Lemma \ref{l:level1}]$_{s}$}   \ \ \ $\exists\ \gamma_1,\dots,\gamma_s\in\bC$ and there is a holomorphic function $h_s: \Omega \to \bC$ such that  $h_2 \vert H_{j_0}^{(m_0)}\equiv 0$ and that
$f_{m_0}=\gamma_1g_{m_0}+\gamma_2g_{m_0}^2+\cdots+ \gamma_sg_{m_0}^s + h_sg_{m_0}^s$.   \fin

\subsection{Concluding the proof}
\begin{lemma}\label{l:serie}
Let $M$ be a connected complex manifold, let $a\in M$,  and let $f, g:(M,a)\to (\bC,0)$ be holomorphic function germs. We assume that there exists a sequence of complex numbers $\{\gamma_s\}_{s\geq 1}$ and a sequence of holomorphic functions
$\{h_s\}_{s\geq 1}$ defined on an open neighbourhood $a\in \Omega \subset M$ , $h_s: \Omega \to \bC$, such that  $h_s(0) = 0$, and $f=\gamma_1g+\gamma_2g^2+\cdots+\gamma_sg^s+h_sg^s$. 

Then $\jac(f,g)<2$ on $M$.
\end{lemma}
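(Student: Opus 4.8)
The plan is to show that the hypothesis forces $f$ to be, in a suitable sense, a holomorphic function of $g$ alone on a dense open subset, from which the vanishing of the Jacobian is immediate. The key observation is that the relations $f=\gamma_1g+\cdots+\gamma_sg^s+h_sg^s$ with $h_s|_{H}\equiv 0$ (where $H:=H_{j_0}^{(m_0)}$) encode the coefficients of a formal power series in $g$. First I would focus on a point $a$ lying on the regular part of the relevant stratum where $g$ is a local submersion; the set where $dg\neq 0$ is dense, and at such a point we may choose local coordinates $(w,z)$ on $M$ with $g=z$.

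\textbf{Extracting a convergent series.}

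In these coordinates the relation reads $f(w,z)=\gamma_1z+\cdots+\gamma_sz^s+h_s(w,z)z^s$. Evaluating the $s$-th relation and the $(s+1)$-st relation and subtracting shows that the Taylor coefficient of $z^s$ in $f$, taken along $H=\{z=0\}$, equals $\gamma_s$; more precisely, since $h_s|_H\equiv 0$, differentiating $s$ times in $z$ and restricting to $z=0$ yields that $\partial_z^s f|_{z=0}=s!\,\gamma_s$ is \emph{constant} in $w$. Thus every $z$-Taylor coefficient of $f$ at points of $H$ is independent of the transverse coordinate $w$. The heart of the argument is to promote this infinite family of pointwise identities to the single statement that $\partial f/\partial w$ vanishes to infinite order along $H$, hence (by analyticity and the identity principle) vanishes identically near $a$. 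Concretely, the function $f-\sum_{k=1}^{s}\gamma_k z^k=h_s z^s$ vanishes to order $s$ along $H$, and $\partial_w(h_s z^s)=z^s\,\partial_w h_s$ also vanishes to order $\geq s$; letting $s\to\infty$ forces $\partial_w f$ to have vanishing $z$-Taylor expansion along $H$, so $\partial_w f\equiv 0$ on a neighbourhood of $a$ in $M$.

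\textbf{Concluding via the Jacobian.}

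Once $\partial_w f\equiv 0$ in the coordinates where $g=z$, we have $f=f(z)$ a function of $z=g$ alone, so $df$ and $dg$ are everywhere proportional and $\jac(f,g)=\partial_w f\cdot\partial_z g-\partial_z f\cdot\partial_w g=0$ identically near $a$. Since $a$ was an arbitrary point of the dense open set $\{dg\neq 0\}\cap\Omega$, and $\jac(f,g)$ is holomorphic on the connected manifold $M$, the identity principle gives $\jac(f,g)\equiv 0$ on all of a neighbourhood of $a$, as claimed.

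\textbf{Main obstacle.}

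The delicate point is the passage from the family of pointwise coefficient identities to the vanishing of $\partial_w f$: a priori the series $\sum\gamma_k g^k$ need not converge, and the functions $h_s$ live only on $\Omega$ and are merely known to vanish on $H$, not to be uniformly controlled. The care needed is to argue at the level of formal Taylor expansions in the transverse variable $z$ along $H$ — where the hypothesis furnishes \emph{all} coefficients simultaneously and shows each is $w$-independent — rather than to sum the (possibly divergent) series itself. This sidesteps convergence entirely: one never needs $\sum\gamma_k z^k$ to converge, only that $\partial_w f$ has identically zero formal expansion, which by analyticity of $\partial_w f$ already implies $\partial_w f\equiv 0$.
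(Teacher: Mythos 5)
Your strategy --- reading off the Taylor coefficients of $f$ transversally to the zero locus of $g$ and concluding that the transverse derivatives of $f$ vanish to infinite order, so that the series $\sum_s\gamma_s\lambda^s$ never has to be summed --- is genuinely different from the paper's proof, and its outline is sound; but as written it has a real gap at the anchoring step. You need a point lying \emph{on} $g^{-1}(0)$ at which $dg\neq 0$, so that $g$ becomes a coordinate $z$ and $H=\{z=0\}$ is the zero set of $g$. Such a point need not exist: the density of $\{dg\neq 0\}$ in $M$, which is what you invoke, says nothing about $\{dg\neq 0\}\cap g^{-1}(0)$. For $g(w,z)=z^{2}$ one has $dg=2z\,dz=0$ at every point of $g^{-1}(0)$, so your coordinates exist at no point of the divisor; and this is exactly the situation the lemma must handle in the paper, where $g=g_{m_0}$ may vanish to order $q\geq 2$ along $H_{j_0}^{(m_0)}$. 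The same conflation resurfaces in your concluding paragraph, where you quantify over ``an arbitrary point of the dense open set $\{dg\neq 0\}\cap\Omega$'': at a point where $g\neq 0$ there is no divisor nearby to expand along, so your argument is intrinsically local to points of $g^{-1}(0)$ (a single such point would in fact suffice, by the identity theorem).

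The gap is repairable, after which your formal argument goes through: take a generic smooth point $b$ of the reduced divisor $g^{-1}(0)^{\red}$ near $a$ (such points exist, since $g(a)=0$ and one may assume $g\not\equiv 0$); there $g=z^{k}u$ with $u(b)\neq 0$ in suitable coordinates, and after absorbing a $k$-th root of $u$ into $z$, $g=z^{k}$. The relation of index $s$ then shows that, along $\{z=0\}$, the coefficient of $z^{m}$ in $f$ equals $\gamma_{m/k}$ when $k\mid m$ and $0$ otherwise, for every $m<sk$; all of these are independent of $w$, so $\partial_{w_i}f$ vanishes to infinite order along $\{z=0\}$, hence $f$ and $g=z^{k}$ both depend on $z$ alone near $b$, all $2\times 2$ minors of the Jacobian of $(f,g)$ vanish near $b$, and the identity theorem concludes. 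By contrast, the paper's proof works at $a$ itself and does confront convergence: it restricts to an arc $\Lambda$ with $g\circ\Lambda=\lambda^{p}$, identifies $\gamma_{j}=\hat f^{(jp)}(0)/(jp)!$, derives $|\gamma_j|\leq \mu r^{-jp}$ from the Cauchy inequalities so that $\theta(\lambda)=\sum_s\gamma_s\lambda^s$ converges near $0$, and then proves the exact identity $f=\theta\circ g$, whence $\jac(f,g)\equiv 0$. Your route avoids the convergence step entirely (your ``main obstacle'' is indeed a non-issue), at the price of having to place the anchor point on the divisor correctly --- which is precisely the point left unjustified. (A minor remark: the vanishing $h_s|_{H}\equiv 0$ is not among the lemma's hypotheses, but, as you indicate, it follows from the stated relations by comparing indices $s$ and $s+1$, so importing it is harmless.)
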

\begin{proof}

We will show that under our assumptions we have
$f=\theta\circ g$,  where $\theta:(\bC, 0)\to (\bC,0)$  is a holomorphic function germ.

\smallskip

\noindent \underline{Step 1.} Let us show  that the formal power series in one variable $\sum_{s=1}^\infty \gamma_s\lambda^s$,  has a positive radius of convergence.

Let $\Lambda: (\bC,0)\to (M,a)$ be a holomorphic function germ such that $g\circ\Lambda\not\equiv 0$. If $\ord_{0}(g\circ\Lambda) =p$ for some  $p\geq 1$,  then, modulo a biholomorphic change of coordinates at $0\in \bC$, one may assume that $g\circ\Lambda(\lambda)=\lambda^p$.   

With these conventions, one writes, for any $s\ge 1$:
\[\hat f(\lambda)=\gamma_1\lambda^{p}+ \gamma_2\lambda^{2p} +\cdots + \gamma_s\lambda^{sp}+\hat h_s(\lambda)\lambda^{sp}\]
where $\hat f:=f \circ\Lambda$,  and $\hat h_s := h_s\circ\Lambda$ with $\hat h_s(0)=0$.  

Let $D'\subset \bC$ be a neighbourhood of the origin such that all the involved functions are holomorphic on $D'$, where by  ``all the involved functions'' we mean precisely $\Lambda, \hat f, g\circ \Lambda$ and $\hat h_s$ for all $s\ge 1$). Let $\overline D(0,r)\subset D'$ denote a disk of radius $r>0$ centred at the origin, and set
$\mu:=\max \bigl(1, \sup\{|\hat f(\lambda)| \  ; \lambda\in \overline D(0,r)\}\bigr)$.

Since $\gamma_j=\frac 1{(jp)!}\hat f^{(jp)}(0)$ for any $j>0$, the
Cauchy inequalities imply that
$|\gamma_j|\leq\frac 1{r^{jp}}\mu$.  Then we obtain $|\gamma_j|^{1/j}\leq\frac 1{r^{p}}\mu^{1/j}\leq\frac 1{r^{p}}\mu$, which shows that the series  $\sum_{s=1}^\infty \gamma_s\lambda^s$  has a positive radius of convergence $\rho = 1/\limsup_{j\ge 1} (|\gamma_j|^{1/j})$.

\medskip

\noindent \underline{Step 2.}  By Step 1 we have proved  the existence of  some neighbourhood $D\subset \bC$ of the origin where $\theta(\lambda):=\sum_{i=0}^\infty \gamma_i\lambda^{j}$ is a holomorphic function. 

It follows that $\theta\circ g$ is a holomorphic function on $g^{-1}(D)$.  By our hypotheses, for any $s\ge 1$, the difference
$$f- \theta\circ g = h_sg^s-\sum_{l\geq s+1}\gamma_lg^l$$
 is then a holomorphic function of order $\ge s$. Since the order of this difference is infinite,  it is identically zero on $g^{-1}(D)$. Consequently, one must have  the identity $f=\theta\circ g$ on $g^{-1}(D)$.
And now, since $f=\theta\circ g$, we have  
 $\jac (f,g) =\jac (\theta\circ g, g) <2$ on $g^{-1}(D)$. Since $M$ is connected, we get $\jac (f,g) <2$ on $M$.
\end{proof}

\smallskip

We apply Lemma \ref{l:serie} to the holomorphic functions $f_{m_0},g_{m_0}$ on $M:= \Reg X_{m_0}$;  we deduce that $\jac (f_{m_0},g_{m_0})<2$ on some open subset of $\Reg X_{m_0}$.
Since  $\pi_1\circ\cdots\circ\pi_m$ is a biholomorphism on the preimage of $X\setminus F^{-1}(0)$, we obtain 
$\jac (f,g)<2$ on some open subset of  $\Reg X$. Now since $\Reg X$ is connected, due to the identity theorem we get that
$\jac (f,g)<2$ on  $\Reg X$. Then Proposition \ref{p:curve} implies  that the image of $F$ is a curve germ,  which contradicts our original assumption that $F$ does not have a well-defined image as a set germ.  

This ends the proof of the finiteness of the algorithm.\fin

\subsection{Proof of Theorem \ref{t:main}}\label{ss:proofmain}
The algorithm at \S \ref{ss:algo} shows how one associates a blossom tree $\cB_{F}$ with thick and thin branches  to a map germ $F:(X, \mathfrak{p})\to (\bC^2, 0)$ which does not have a well-defined image as  a set germ. The finiteness of this tree is proved all along the above Section \ref{s:finiteness}. 
\fin
 
 As a supplement to Theorem  \ref{t:main},  there is the following fact concerning our map germ $F:(X, \mathfrak{p})\to (\bC^2, 0)$, which follows from the above proof. Let $\tau_{1}, \ldots , \tau_{s}$ be the finite sequence of blow-ups needed in the algorithm until there are no more undetermined points, see also the notation used in \S\ref{ss:blossomtree}.
  Let $\tau := \tau_{s}\circ \cdots\circ \tau_{1}: S \to \bC^{2}$ denote the composed map, and let $\bE$ be its total exceptional divisor.
Then:
\begin{corollary}\label{c:im}
 A curve germ $(K,0)$ is in the image $F_*(\cA_{X,\mathfrak{p}})$ if and only if there is a point $q \in \bE$ which belongs to a \emph{thick branch} of the blossom tree  $\cB_{F}$,  and a curve germ $(\widetilde K, q) \subset (S, q)$, such that  $\tau_*(\widetilde K, q) = (K,0)$.
 \fin
\end{corollary}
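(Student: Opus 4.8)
The plan is to prove Corollary \ref{c:im} as a direct consequence of the structure theorem (Theorem \ref{t:main}) together with the characterisation of thick branches. The statement to establish is an ``if and only if'': a curve germ $(K,0)$ belongs to $F_*(\cA_{X,\mathfrak{p}})$ exactly when it arises as the push-forward $\tau_*(\widetilde K, q)$ of some curve germ based at a point $q$ lying on a thick branch of $\cB_{F}$. The two directions require tracking what the blossom tree records about which curve germs are realised, so I would first recall precisely what the branches encode.

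For the direction ``$\Leftarrow$'', suppose $q \in \bE$ lies on a thick branch, say on the exceptional divisor $E_i$ which yielded $\cE^{\thick}_i = \bigsqcup_{\xi \in E_i} \cA_{S,\xi}$. By the definition of a thick branch (Definition \ref{r:concl}(ii)) and the algorithm, the point $q$ is determined-full unless it is one of the finitely many undetermined points; but the defining property of a thick branch is that every such $\xi$ contributes the \emph{full} space $\cA_{S,\xi}$ to the tree. Hence any curve germ $(\widetilde K, q) \in \cA_{S,q}$ is an element of $\cB_F$. By Theorem \ref{t:main} we have $F_*(\cA_{X,\mathfrak{p}}) = \tau_*(\cB_F)$, so $\tau_*(\widetilde K, q) \in F_*(\cA_{X,\mathfrak{p}})$, which is exactly $(K,0)$.

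For the direction ``$\Rightarrow$'', I would argue by tracing an arbitrary $(K,0) \in F_*(\cA_{X,\mathfrak{p}})$ through the tree. Again by Theorem \ref{t:main}, $(K,0) = \tau_*(\gamma)$ for some $\gamma \in \cB_F$; the content is to show that $\gamma$ can be taken to sit on a thick branch, i.e.\ that thin branches do not contribute genuinely new curve germs. The key point is that a thin branch $\cE^{\thin}_i = \bigsqcup_{\xi \in E_i} 0_\xi$ consists only of degenerate germs $0_\xi$, except possibly at a terminal point $\xi_j$ carrying the full structure $\cP_j = \cA_{S,\xi_j}$. The degenerate germ $0_\xi$ on a thin branch pushes down, via the proper transforms of the preceding divisors, to a curve germ that is already realised at the special point where the thin branch meets the divisor it grows from; and a terminal thick point $\cP$ can be handled like a thick branch since it carries the full $\cA_{S,\xi_j}$. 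Thus every element of $\tau_*(\cB_F)$ arises from a point on a thick branch (or the exceptional terminal thick point, which the statement absorbs into the notion of ``thick branch'').

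The main obstacle I anticipate is the bookkeeping in the ``$\Rightarrow$'' direction: making rigorous that the push-forward of a non-degenerate curve germ realised ``through'' a thin branch can be re-realised at a point on a thick branch, given the linear-sequence-of-thin-branches exceptional case treated by the terminal thick point $\cP$ in \S\ref{ss:blossomtree}. One must be careful that $\tau$ is the composition $\tau_s \circ \cdots \circ \tau_1$ and that proper transforms interact correctly across the successive blow-ups, so that the base point of the realising germ lands where claimed. Once the definitions of thick/thin branch and terminal point are unwound, the argument is essentially a matter of matching the tree's combinatorial description of $F_*(\cA_{X,\mathfrak{p}})$ against the geometric push-forward, and no new analytic input beyond Theorem \ref{t:main} should be needed.
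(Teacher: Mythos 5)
Your proposal is correct and takes essentially the same route as the paper, which states Corollary \ref{c:im} with no separate proof precisely because it is the immediate unwinding of Theorem \ref{t:main} together with the definitions of the branches: $\cE^{\thin}_j=\bigsqcup_{\xi}0_\xi$ contributes only degenerate germs under $\tau_*$, so any nonzero $(K,0)$ in $\tau_*(\cB_F)$ must come from a point of a thick branch (with the terminal point $\cP$ absorbed in the all-thin case), which is exactly your two-direction argument. The only imprecision is the sentence claiming a degenerate germ $0_\xi$ on a thin branch ``pushes down to a curve germ realised at the special point'' --- it simply pushes down to the degenerate germ $0_0$ --- but this does not affect the validity of your proof.
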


\subsection{Blossom tree and the image complexity degree}\label{ss:deg}

From Definition \ref{d:degree},  it follows that $\kappa(F)$ is equal to $0$ if and only if $F_*(\cA_{X,\mathfrak{p}}) = \cA_{Y,y}$, and thus, by Remark \ref{r:huck}, if and only if  $F$ is a locally open map.  On the other hand,  in this later case our Algorithm \ref{ss:algo} does not start, and therefore the number of steps is 0.
 
  In case the image of $F$ is a curve germ $(C,0)$, one may construct curve germs $(C_{m},0) \not= (C,0)$ such that $\tan(C_{m},C) =m$ for any $m \gg 1$, and therefore one has $\kappa(F) =\infty$.  
  
  For all other situations, we have the following theorem.

\begin{theorem} \label{t:degree}
 If $F :(X,\mathfrak{p}) \to (\bC^{2},0)$ is a non-constant holomorphic map germ the image of which is not well-defined as a set-germ, then $\kappa(F)$ is precisely the length of the maximal chain of branches in the blossom tree $\cB_{F}$.
 \end{theorem}
\begin{proof}
The minimal number $s\ge 1$ of simultaneous blow-ups which compose the final map $\tau = \tau_{s}\circ \cdots\circ \tau_{1}$ of our above Algorithm \ref{ss:algo} is equal, by definition,  to the length of the longest (maximal) chain of branches in the blossom tree $\cB_{F}$.  Thus what we have to prove here is that the minimal number $s$ of blow-ups  is equal to $\kappa(F)$.

Let 
$C \in F_*(\cA_{X,\mathfrak{p}})$. If $D\in \cA_{\bC^2,0}$ is such that $\tan(C,D)\geq s$ then 
$\tau^*(C)\cap \tau^{-1}(0)=\tau^*(D)\cap \tau^{-1}(0)$. Since $s$ is the last step of the algorithm,
every point of $\tau^{-1}(0)$ must be determined, either full or empty. Thus,  since $C \in F_*(\cA_{X,\mathfrak{p}})$, it follows that 
$\tau^*(C)\cap \tau^{-1}(0)$ is determined-full, which implies that $D \in F_*(\cA_{X,\mathfrak{p}})$. It then follows that $\kappa_C(F)\leq s$, and therefore $\kappa(F)\leq s$.

Let us now denote $\tilde\tau := \tau_{s-1}\circ \cdots\circ \tau_{1}$. Since the algorithm does not stop after $s-1$ steps, it follows that
$\tilde\tau^{-1}(0)$ contains at least an undetermined point, say $\xi\in \tilde\tau^{-1}(0)$. 
In this case there exist $\tilde C,\tilde D\in\cA_{S_{s-1},\xi}$ (see Diagram \eqref{diagram2} for the notation) such that  $C:=\tilde\tau_{*}(\tilde C)\in F_*(\cA_{X,\mathfrak{p}})$ and 
$D:=\tilde\tau_{*}(\tilde D)\not\in F_*(\cA_{X,\mathfrak{p}})$. On the other hand we have $\tan(C,D)\geq s-1$, and this implies the strict inequality $\kappa_C(F)>s-1$. We thus get
 $\kappa_C(F)\geq s$, which in turn implies $\kappa(F)\geq s$. 
 
 This ends our proof  of the equality $s = \kappa(F)$.
\end{proof}

\section{Producing the same blossom tree by a map $(\bC^2, 0)\to (\bC^2, 0)$}\label{s:sametree}

Let $F=(f,g):(X,\mathfrak{p})\to (\bC^2, 0)$ be a holomorphic map germ. We show here that there exists a holomorphic map germ 
$G:(\bC^2, 0)\to (\bC^2, 0)$ such that $G_*(\cA_{\bC^2,0})=F_*(\cA_{X,\mathfrak{p}})$. 

By our finiteness proof,  the ``blossom tree algorithm'' applied to $F$ yields  a finite sequence of blow-ups. As before, we denote by  $\tau := \tau_{r}\circ \cdots \circ \tau_{1}: S\to  \bC^2$  the composition of these blow-up maps, where $S$ is the total space,  
and by $\bE = \cup_{j}E_{j} \subset S$ the exceptional divisor of $\tau$. We say that $\bE$ is the \emph{skeleton} of our blossom tree $\cB_{F}$.

\subsection*{Proof of Theorem \ref{t:main2}.}

In case the image of $F$ is well-defined as a set germ, then by Proposition \ref{p:arkivprop}, either $F$ is locally open, in which case one can take $G$ as the identity, or the image is an irreducible plane curve germ $(C,0)$, and then it is the image of a map $(f,g)$ deduced from a Puiseux expansion of $C$.

We will thus treat the situation when the image of $F$ is not a set germ at $0\in \bC^{2}$.

\smallskip
\noindent
\underline{Case 1.} The tree $\cB_{F}$ has only thin branches. 

At each blow-up there exists  precisely one determined-full point along the exceptional divisor (all the others being determined-empty).  This holds in particular for the last blow-up $\tau : S \to S_{k-1}$ with its exceptional divisor $E_{k}$. Let $\xi\in E_{k}$ be the corresponding determined-full point. 

Since $S$ is nonsingular, let $\iota:(\bC^2,0)\to (S,\xi)$ be a local biholomorphism. Then $G :=\tau\circ \iota$ satisfies our claim.

\smallskip
\noindent
\underline{Case 2.} The tree $\cB_{F}$ has at least one thick branch. 

If $B$ denotes the union of all the components of $\bE$ which correspond to thick branches, then $B$ is a 1-dimensional complex subspace of  $S$, and it is a compact connected  set. (The connectivity of $B$ follows by the same argument that was used in the proof of Lemma \ref{l:onepoint}.)

Our exceptional divisor  $\bE = \cup_{j=1}^{R}E_{j} \subset S$
satisfies the criterion by Grauert \cite{Gr} and Mumford \cite{Mu}, namely  the intersection matrix $[(E_i\cdot E_j)]_{i,j=1,\ldots , k}$ is negative definite. It follows that the subset $B\subset E$ of exceptional divisors also has a negative definite matrix, and therefore by the the same criterion, $B$ is exceptional. This means that there are a two-dimensional complex space $Y$, and
a holomorphic map $\rho:S\to Y$ which contracts $B$ to a point $y\in Y$,  such that the restriction
$S\setminus B\to Y\setminus\{y\}$ is a biholomorphism.

Since $S$ is smooth and hence normal, it follows that $Y$ is normal. We refer to \cite{Gr} for the definitions and results concerning exceptional sets that we use here.

The surface $Y$ contains the image by $\rho$ of the difference 
$\overline{E\setminus B}$. This image consists of those components of $\bE$ corresponding to thin branches.
 
 We define the map $\nu:(Y,y)\to (\bC^2,0)$ such that $\nu(\rho(\overline{E\setminus B}   ) )=0$, and that $\nu_{|Y\setminus\{y\}}=\tau\circ\rho^{-1}$. 
It then follows that $\nu\circ \rho=\tau$, and that $\nu$ is continuous on $Y$ and holomorphic on  $Y\setminus\{y\}$. Since $Y$ is normal, we deduce that $\nu$ is holomorphic on $Y$. 
 
\smallskip

Next, let us set the notation:
 
\begin{equation}\label{eq:SB}
\cA_{S,B}:=\bigsqcup_{\xi\in B}\cA_{S,\xi}.
\end{equation}
 
 We claim that the equality $\rho_*(\cA_{S,B})=\cA_{Y,y}$ holds. Indeed, if $(C,y)\subset (Y,y)$ is some non-constant curve germ, then $\rho^{-1}(C)=C' \cup B$,  where $C'$ is the proper transform of $C$ by $\rho$, a non-degenerated curve. If $C'\cap B=\{\xi\}$, then we have $\rho_*(C',\xi)=(C,y)$. Our claim is proved.

Now, since $\nu\circ \rho=\tau$ and $\rho_*(\cA_{S,B})=\cA_{Y,y}$, we get $\nu_*(\cA_{Y,y})=F_*(\cA_{X,\mathfrak{p}})$.  
It then remains to show that $\cA_{Y,y}$ can be obtained as the image 
of a locally open map germ with target $\bC^{2}$.
This is provided by the following result.
\begin{theorem}\cite{CJ}\label{t:cj}
If $(Y,y)$ is the germ of an irreducible two-dimensional complex space, then there exists a locally open map $\omega:(\bC^2,0)\to (Y,y)$ if and only if there exists a resolution of $(Y,y)$ such that all irreducible components of its exceptional divisor are rational.
\fin
\end{theorem}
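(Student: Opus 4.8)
The plan is to prove both implications by reducing ``locally open'' to ``finite'' and by passing through a fixed resolution $\sigma:(\tilde Y,E)\to (Y,y)$, with exceptional divisor $E=\bigcup_i E_i$. The preliminary observation is that a non-constant holomorphic germ $\omega:(\bC^2,0)\to (Y,y)$ between equidimensional germs is locally open if and only if it is \emph{finite}, i.e. $0$ is isolated in $\omega^{-1}(y)$. Indeed, finiteness forces local openness by Proposition \ref{p:n-2dim} (the fibre has a component of dimension $0=n-p$ at $0$), while conversely, if $\omega^{-1}(y)$ contained a curve through $0$ the rank of $\omega$ would drop to $\le 1$ along it and the image could not contain a neighbourhood of $y$. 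Thus in both directions it suffices to work with finite maps.

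First I would prove the ``only if'' direction. Given a locally open, hence finite, $\omega$, I would eliminate the indeterminacy of the meromorphic map $\sigma^{-1}\circ\omega:(\bC^2,0)\dashrightarrow \tilde Y$ by a finite sequence of point blow-ups $\psi:Z\to \bC^2$, obtaining a holomorphic $\tilde\omega:Z\to \tilde Y$ with $\sigma\circ\tilde\omega=\omega\circ\psi$. Since $Z$ is a blow-up of a smooth surface, its exceptional divisor $D=\psi^{-1}(0)$ is a tree of $\bP^1$'s. From $\omega^{-1}(y)=\{0\}$ one gets $\tilde\omega^{-1}(E)=(\omega\circ\psi)^{-1}(y)=\psi^{-1}(0)=D$, and $\tilde\omega$ is generically finite and, being proper with dense image, surjective onto a neighbourhood of $E$. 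Hence $\tilde\omega$ restricts to a finite surjective morphism $D\to E$ of curves, so every component $E_i$ is the image of some component $D_j\cong\bP^1$ under a non-constant morphism; therefore each $E_i$ has geometric genus $0$, i.e. is rational.

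For the converse I would run this construction backwards: assuming every $E_i$ rational, I would build a smooth surface $Z$, a proper modification $\psi:Z\to \bC^2$ (automatically a composition of point blow-ups, so $D=\psi^{-1}(0)$ is again a rational tree), together with a finite morphism $\tilde\omega:Z\to \tilde Y$ such that $\tilde\omega^{-1}(E)=D$ and $\tilde\omega(D)=E$. Granting such a $\tilde\omega$, the composite $\sigma\circ\tilde\omega$ is constant equal to $y$ on the fibre $D=\psi^{-1}(0)$, so by the rigidity of proper modifications of the normal surface $\bC^2$ it descends to a holomorphic germ $\omega:(\bC^2,0)\to (Y,y)$ with $\omega\circ\psi=\sigma\circ\tilde\omega$. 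Then $\omega^{-1}(y)=\psi\bigl((\sigma\circ\tilde\omega)^{-1}(y)\bigr)=\psi(D)=\{0\}$, so $\omega$ is finite, hence locally open by the first paragraph, as required.

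The main obstacle is precisely the construction of the finite morphism $\tilde\omega$ in the converse: one must exhibit the germ of $\tilde Y$ along $E$ as a branched cover of a blow-up of $(\bC^2,0)$, with the branching engineered so that the dominating configuration $D$ contracts to the \emph{smooth} point $0$. This is where rationality of every $E_i$ is indispensable, since it supplies the non-constant morphisms $\bP^1\to E_i$ out of which a finite map of the dual trees, and then of the surface germs, can be assembled; the negative-definiteness of the intersection matrix (the Grauert--Mumford criterion \cite{Gr}, \cite{Mu}) guarantees both that the resulting $D$ is contractible and that the descent to $(\bC^2,0)$ is legitimate. Carrying out this assembly, matching intersection numbers and ramification across the tree so that $\tilde\omega$ is genuinely finite and $D$ collapses to a single smooth point, is the technical heart of the argument.
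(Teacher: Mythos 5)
First, a point of comparison: the paper does not prove Theorem \ref{t:cj} at all --- it is quoted from \cite{CJ} and used as a black box in the proof of Theorem \ref{t:main2}. So your argument stands alone, and the assessment below is on its own terms.

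Your preliminary reduction is false, and this is the crux of the matter. Finiteness of $\omega^{-1}(y)$ does imply local openness (that is exactly Proposition \ref{p:n-2dim}), but the converse fails: a locally open germ between surface germs can have a one-dimensional central fibre. Take $\omega:(\bC^2,0)\to(\bC^2,0)$, $\omega(x,y)=\bigl(xy,\,x(y^2-x)\bigr)$. Its fibre over $0$ is the curve $\{x=0\}$, yet $\omega$ is locally open at $0$: for small $(u,v)$ with $|v|\le |u|$, put $\lambda=v/u$; the cubic $y^{2}(y-\lambda)=u$ has a root with $|y|\le |u|^{1/3}$ (the product of its roots is $\pm u$), and then $x:=y^{2}-\lambda y$ is small and $\omega(x,y)=(u,v)$; the region $|u|\le|v|$ is handled by the same computation in the other chart. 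Equivalently, in the paper's own language: after one blow-up the ratio $[y:y^{2}-x]$ has an indeterminacy point at the origin, so $E_{1}$ is thick, and this ratio is non-constant along $\{x=0\}$, so $E_{1}$ has no undetermined points; hence $\omega_*(\cA_{\bC^2,0})=\cA_{\bC^2,0}$. Your justification (``the rank drops to $\le 1$ along the curve'') says nothing about the image of the full two-dimensional ball, and such ``dicritical'' maps are precisely why the statement is delicate.

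The consequences for your two directions are unequal. In the ``only if'' direction you use finiteness to get $\tilde\omega^{-1}(E)=D$ compact and $\tilde\omega$ proper; as written this is a gap, but that direction is repairable: even when $\omega^{-1}(y)$ contains a curve, a limiting argument (using local openness for \emph{every} radius $\e$, plus properness of $\psi$) shows $E\subset\tilde\omega(D)$, and then each compact irreducible $E_i$ is the non-constant image of some component of the tree $D$ of $\bP^1$'s, hence rational. The ``if'' direction, however, is fatally flawed, for two separate reasons. First, the finite morphism $\tilde\omega$ is never constructed; you defer it as ``the technical heart'', so the converse is in effect assumed rather than proved. Second, and worse, no such finite morphism can exist in general: if it did, your descent would produce a finite surjective germ $\omega:(\bC^2,0)\to (Y,y)$, and then the trace map splits the inclusion $\cO_{Y,y}\hookrightarrow \omega_*\cO_{\bC^2,0}$, so by Boutot's theorem $(Y,y)$ would have rational singularities. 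But the hypothesis ``all exceptional curves rational'' covers non-rational singularities as well --- for instance cusp singularities, whose minimal resolution has as exceptional set a cycle of smooth rational curves. For such germs the theorem asserts that a locally open $\omega$ exists, while a finite one provably does not; so any correct proof of the converse must construct locally open maps with one-dimensional central fibre --- exactly the maps your preliminary equivalence wrongly excludes. In other words, in the converse direction it is the strategy itself, not merely its execution, that cannot work.
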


In our setting the map  $\rho:S\to Y$ is a resolution of the surface germ $(Y,y)$ and therefore  Theorem \ref{t:cj} aplies. We deduce that there exists a locally open map $\omega:(\bC^2,0)\to (Y,y)$, thus $G=\nu\circ\omega$ satisfies our claim.
This ends the proof of Theorem \ref{t:main2}.


\section{Extension to a singular target}\label{s:extension}

 Let us first go back to Proposition \ref{p:arkivprop} which was  stated for a  map germ $F:(X,\mathfrak{p})\to (\bC^{2},0)$. Its proof (based on the proof of \cite[Proposition 2.1]{JT-arkiv}), actually holds without modifications when we replace the target $(\bC^{2},0)$ by an irreducible surface germ $(Y,y)$. Namely, we have the following statement: \emph{if
the image of $F$ is a well defined set germ at the origin,  then either $(\im F, y)=(Y,y)$ and therefore $F_*(\cA_{X,\mathfrak{p}}) =\cA_{Y,y}$, or $(\im F,y)$  is an irreducible  complex curve germ, and in this case $F_*(\cA_{X,\mathfrak{p}})$ is a single element of  $\cA_{Y,y}$.}  

\smallskip
 Let us now assume that the image of $F:(X,\mathfrak{p})\to (Y,y)$ is not a well-defined set germ.
We will extend  our previous result on $F_*(\cA_{X,\mathfrak{p}})$.

As before, let $(X,\mathfrak{p})$ be an irreducible $n$-dimensional complex analytic  space germ, $n \geq 2$,  and 
 let $(Y,y)$ be an irreducible germ of a singular surface. 
 
Let $\mu_X:\hat X \to X$ and $\mu_Y:\hat Y\to Y$ be their normalizations. 
Since the image of $F$ is not included a the germ of a curve and $(X,\mathfrak{p})$ is an irreducible germ, it follows that
the pre-image by $F$ of the non-normal locus of $Y$ 
is a nowhere dense analytic subset of $X$.  

Then \cite[Proposition,  subsect. 3, sect. 4, ch. 8]{GR} implies that there is a holomorphic map $\hat F:\hat X\to\hat Y$
such that the following diagram is commutative:

\begin{equation*}
\xymatrix{
    \hat X  \ar[d]_{\mu_X} \ar[rr]^{\hat F}    &    &    \hat Y\ar[d]_{\mu_Y}  \\
         X \ar[rr]^F   &   &  Y
} 
\end{equation*}

Since the normalization maps are locally open, we have the equality  $F_*(\cA_{X,\mathfrak{p}})=(\mu_Y)_*(\hat F_*(\cA_{\hat X,\hat{ \mathfrak{p}} }))$,
where $\hat{\mathfrak{p}}=\mu_X^{-1}(\mathfrak{p})$, thus $F_*(\cA_{\hat X,\hat{\mathfrak{p}}})$ determines $F_*(\cA_{X,\mathfrak{p}})$. 

Therefore we may, and will suppose in the following that $Y$ is normal.

\bigskip

Let then $\tau_0:\widetilde Y\to Y$ be a desingularization, and 
let $L=\tau_0^{-1}(y)$ denote the ``exceptional set'' of $\tau_{0}$. Then $L$ is compact by definition, and  connected since $Y$ is irreducible. It may have several irreducible components\footnote{May be assumed nonsingular by passing to a good resolution, but we do not need this assumption here.}. 

We consider the fibered product  $W:= X\times_{Y} \widetilde Y$ together with the two projections $\pr_1:W\to X$, and  
$\pr_2:W\to \widetilde Y$. Let $X_0$ be the closure in $W$ of $\pr_1^{-1}\bigl(X\setminus F^{-1}(y)\bigr)$.
We denote  $F_0 := (\pr_2)_{|X_0}$,  and $\pi_0:= (\pr_1)_{|X_0}$. 
We then  have the following commutative diagram:
\begin{equation}\label{eq:surface}
\xymatrix{
    X_0  \ar[d]_{\pi_0} \ar[rr]^{F_0}    &    &    \widetilde Y\ar[d]_{\tau_0}  \\
         X \ar[rr]^{F}   &   &  Y
} 
\end{equation}

The intersection  
$X_0\cap(\{\mathfrak{p} \}\times L) = \pi_0^{-1}(\mathfrak{p} )$ is a closed analytic subset of  
$ \{\mathfrak{p} \}\times L$.  We have that $X_0\cap(\{\mathfrak{p} \}\times L)$
is connected and thus it is either a point or a connected union of irreducible components of $\{\mathfrak{p} \}\times L$.

Indeed, this connectivity claim is similar to what is shown in the proof of  
 Lemma \ref{l:onepoint}. More explicitly, supposing that $X_0\cap(\{\mathfrak{p} \}\times L)$  is not connected, let $K_1,\dots,K_m$,  be its connected components, where $m\geq 2$.
Let then $V_1,\dots,V_m\subset X_0$ be pairwise disjoint open neighbourhoods of $K_1,\dots,K_m$, respectively.
Let $V$ be a connected open neighbourhood of $\mathfrak{p}$ such that $\pi_0^{-1}(V)\subset \bigsqcup_{j=1}^mV_j$.
The existence of $V$ follows from the properness of $\pi_0$.  By construction, for any $j=1,\ldots ,m$ the set $\pi_0^{-1}(V)\cap V_j$ is an open neighbourhood of $K_j$. 
Since
$\pi_0^{-1}( F^{-1}(y))$ has codimension $\ge 1$ in $X_{0}$, the intersection of open sets
$\pi_0^{-1}(V\setminus F^{-1}(y))\cap V_j$ is non-empty.
Moreover, as $(X,\mathfrak{p})$ is irreducible, the complement 
$V\setminus F^{-1}(y)$ is connected, and since the restriction 
${\pi_0}_{|}: \pi_0^{-1}(V\setminus F^{-1}(y)) \to V\setminus F^{-1}(y)$ is a bi-holomorphism, it follows that $\pi_0^{-1}(V\setminus F^{-1}(y))$ is also connected, and this contradicts the fact that it is included in the disjoint union  $\bigsqcup_{j=1}^mV_j$ of more than two open sets. Our claim is proved.

\

We continue our construction.
An irreducible component $L_1$ of $L$ is called \emph{thick} if  
 the inclusion $\{\mathfrak{p} \}\times L_1\subset \pi_0^{-1}(\mathfrak{p})$ holds; it is called \emph{thin} if this inclusion does not hold.   As before,  a point $\xi\in L$ is called:
 
 \medskip
 
 $\bullet$ {determined-empty}, if $\xi\not\in F_0(\pi_0^{-1}(\mathfrak{p} ))$.
 
 $\bullet$ {determined-full}, if $\xi\in F_0(\pi_0^{-1}(\mathfrak{p} ))$ and
${F_0}_*\bigl(\cA_{X_0,(\mathfrak{p} ,\xi)}\bigr) = \cA_{\widetilde Y,\xi}$.

 $\bullet$ {undetermined}, if  $\xi\in F_0(\pi_0^{-1}(\mathfrak{p} ))$ and
${F_0}_*\bigl(\cA_{X_0,(\mathfrak{p} ,\xi)}\bigr) \subsetneq \cA_{\widetilde Y,\xi}$.

 \medskip

By the same arguments as those used in the proof of Lemma \ref{l:finite}, applied to diagram \eqref{eq:surface},  one  shows that the set of undetermined points is finite.
Let then $\{\xi_1,\dots,\xi_r\}\subset L$ be the set of these undetermined  points.

At this level we start to apply our Algorithm \ref{ss:algo} to each map germ $F_0:(X_0,(\mathfrak{p} ,\xi_j))\to (\widetilde Y,\xi_j)$, where $j$ runs from $1$ to $r$. The result is a finite number of blossom trees $\cB_{F_{0}, j}$, one for each point $\xi_{j} \in \{\xi_1,\dots,\xi_r\}$. We
therefore have the exceptional divisor $L$ of $\pi_{0}$, with thick and thin components, on which grow $r$ blossom trees. This building is a new blossom tree like structure; we will call it the ``forest on $L$'' associated to $F_{0}$, and denote it by $\cB_{F, \pi_{0}}$.

If we apply the Algorithm \ref{ss:algo} to the above named $r$ points of $L$ in a  simultaneous manner, than we operate
a finite number of simultaneous blow-ups at each blossom tree; let  $\tau_{j}$ denote the total blow-up map at the step $j$.
We obtain a finite sequence of blow-ups:
\[
 S^{(s)} \stackrel{\tau_{s}}{\longrightarrow} S^{(s-1)} \stackrel{\tau_{s-1}}{\longrightarrow} \cdots \stackrel{\tau_{1}}{\longrightarrow} \widetilde Y
\]
and a commutative diagram:
\begin{equation}
\xymatrix{
    X_s  \ar[d]_{\pi_s} \ar[rr]^{F_s}    &    &    S^{(s)}\ar[d]_{\tau_s}  \\
         X_0 \ar[rr]^{F_0}   &   &  \widetilde Y .
} 
\end{equation}

Setting $S:=S^{(s)}$, and $\tau := \tau_{1}\circ \cdots \circ \tau_{s}$, we have proved the following:

\begin{theorem}\label{t:singsurf}
 Let $(X,\mathfrak{p})$ be an irreducible complex analytic set germ and $F:(X,\mathfrak{p})\to (Y,y)$ be a holomorphic map germ
such that the image of $F$ is not a well-defined set germ. 
Then $F_*(\cA_{X,\mathfrak{p}})=(\tau\circ \tau_0)_*(\cB_{F, \pi_{0}})$.
\fin
\end{theorem}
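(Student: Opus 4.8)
The plan is to reduce the singular-target statement to the smooth case already settled in Theorem~\ref{t:main}, by inserting a resolution of $(Y,y)$ and then running the blow-up algorithm at smooth points of the resolved surface.

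First I would dispose of the non-normality of the target. Taking normalizations $\mu_X:\hat X\to X$ and $\mu_Y:\hat Y\to Y$ and lifting $F$ to $\hat F:\hat X\to\hat Y$ through the universal property of normalization (legitimate because, the image not being contained in a curve, the preimage of the non-normal locus is nowhere dense), one uses that normalization maps are locally open to obtain $F_*(\cA_{X,\mathfrak{p}})=(\mu_Y)_*\bigl(\hat F_*(\cA_{\hat X,\hat{\mathfrak{p}}})\bigr)$. Hence it is enough to treat $Y$ normal. With $Y$ normal I would fix a desingularization $\tau_0:\widetilde Y\to Y$, set $L:=\tau_0^{-1}(y)$ (compact, and connected since $Y$ is irreducible), form the fibered product $W:=X\times_Y\widetilde Y$, and let $X_0$ be the closure of $\pr_1^{-1}(X\setminus F^{-1}(y))$. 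This yields the commutative square \eqref{eq:surface} with $\pi_0$ proper and surjective, playing the role of \eqref{diagram} from the smooth case, with the single exceptional $\bP^1$ now replaced by the possibly reducible set $L$.

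The heart of the argument is to verify that the smooth-case classification and finiteness survive this replacement. I would classify each irreducible component of $L$ as thick or thin according to whether $\{\mathfrak{p}\}\times L_1\subset\pi_0^{-1}(\mathfrak{p})$, and each point of $L$ as determined-empty, determined-full, or undetermined as in Definition~\ref{d:points}. The connectivity of $\pi_0^{-1}(\mathfrak{p})$ then follows exactly as in Lemma~\ref{l:onepoint}, and the finiteness of the set of undetermined points exactly as in Lemma~\ref{l:finite}, both applied to \eqref{eq:surface}; these proofs transcribe verbatim since they use only properness of the projection, irreducibility of $X$, and the biholomorphism property off $F^{-1}(y)$. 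Having isolated finitely many undetermined points $\xi_1,\dots,\xi_r\in L$, I would note that $\widetilde Y$ is smooth at each $\xi_j$, so a local chart identifies $(\widetilde Y,\xi_j)$ with $(\bC^2,0)$ and the Algorithm~\ref{ss:algo} applies to each germ $F_0:(X_0,(\mathfrak{p},\xi_j))\to(\widetilde Y,\xi_j)$ without change; its finiteness is guaranteed by \S\ref{s:finiteness}. Grafting the resulting blossom trees onto the thick and thin components of $L$ assembles the forest $\cB_{F,\pi_0}$, and carrying out the successive ramified blow-ups simultaneously produces the total map $\tau$. Finally, pushing down by $\tau\circ\tau_0$ and applying the pushforward identity of Lemma~\ref{l:equal} at each stage yields $F_*(\cA_{X,\mathfrak{p}})=(\tau\circ\tau_0)_*(\cB_{F,\pi_0})$.

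The hard part, conceptually, is the very first stage: unlike in the smooth case, where one starts from a single blow-up with exceptional $\bP^1$, here one starts from a resolution $\tau_0$ whose exceptional set $L$ may have several components meeting in a normal-crossing configuration. I would expect the main care to go into checking that the thick/thin dichotomy and the finiteness of undetermined points are genuinely unaffected by this more complicated geometry of $L$. The decisive simplification is that every subsequent blow-up occurs at a smooth point of $\widetilde Y$, so the entire apparatus of the smooth case --- in particular the stabilization-of-order argument from \S\ref{s:finiteness} underpinning termination --- transfers locally and verbatim, and no new phenomenon arises beyond the bookkeeping of the resolution's exceptional components.
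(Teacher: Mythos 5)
Your proposal is correct and follows essentially the same route as the paper: reduction to a normal target via normalization and the lifting property of \cite{GR}, then a desingularization $\tau_0$ with exceptional set $L$, the fibered-product diagram \eqref{eq:surface}, the thick/thin classification of components of $L$, connectivity and finiteness of undetermined points transcribed from Lemmas \ref{l:onepoint} and \ref{l:finite}, and finally the Algorithm \ref{ss:algo} run at the finitely many undetermined points (which are smooth points of $\widetilde Y$) to assemble the forest $\cB_{F,\pi_0}$ and push it down by $\tau\circ\tau_0$. No substantive difference from the paper's own argument.
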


\begin{remark}
 Let $B\subset S$ denote the union of the thick components of $\cB_{F, \pi_{0}}$. Recalling our notation 
\eqref{eq:SB}:
$\cA_{S,B}:=\bigsqcup_{\xi\in B}\cA_{S,\xi}$, 
we may give the similar remark as that stated as Corollary \ref{c:im}, namely: 

A curve germ $(K,0)$ is in the image $F_*(\cA_{X,\mathfrak{p}})$ if and only if there is a point $q$ on some \emph{thick branch} of the blossom tree  $\cB_{F, \pi_{0}}$,  and a curve germ $(\widetilde K, q) \subset (S, q)$, such that  $(\tau\circ \tau_0)_*(\widetilde K, q) = (K,0)$.
\end{remark}

\medskip
 
\section{examples}\label{s:ex}
\begin{example}\label{ex:zero}
 Let $X:= \bC^{2}$, and $F:(\bC^2,0)\to (\bC^2,0), \ \ F(x,y)=(xy, y).$

We blow-up the target $\bC^2$ at the origin, with $E_1$ as its exceptional divisor.
$$\widetilde{\bC^2}=\{(u,v,[\xi_0:\xi_1]):u\xi_1=v\xi_0\}\subset \bC^2\times\bP^1, \ \ E_1=\{(0,0)\}\times\bP^1.$$

We have $F^{-1}(0)=\{(x,y)\in\bC^2 \mid y=0\}$. By our definition in \S \ref{ss:blowup}: 
$$X_1 = \cl  \bigl\{(x,y),(u,v),[\xi_0 : \xi_1])\in\bC^2\times\bC^2\times\bP^1 \mid u=xy,v=y,u\xi_1=v\xi_0, y\neq 0\bigr\} ,$$
 where the closure is taken in the fibered product space:
\[
\bigl\{ (x,y),(u,v),[\xi_0:\xi_1])\in\bC^2\times\bC^2\times\bP^1 \mid u=xy,v=y,xy\xi_1=y\xi_0\bigr\}.
\]
Dividing out by $y\neq 0$ we get:
$$X_1\subseteq \{(x,y),(u,v),[\xi_0:\xi_1])\in\bC^2\times\bC^2\times\bP^1 \mid u=xy,v=y,x\xi_1=\xi_0\},$$
which is actually an equality since the right hand side is nonsingular and connected.
Next
$$ \pi_1^{-1}(0,0)= X_{1} \cap \{x=0,y=0\} = \{(0,0),(0,0),[0:1]\}. $$
We get $F_1(\pi_1^{-1}(0,0))=((0,0),[0:1])$, which shows that all points in $E_1$
are determined-empty except of the point $[0:1]$. 
This means that $E_1$ is thin, and it remains to find the nature of $[0:1]$, i.e. what happens when we further blow-up at this point.
In the chart 
$$\widetilde{\bC^2}=\{(u,v,[\xi_0:\xi_1]) \mid u\xi_1=v\xi_0,\ \xi_1\neq 0\} \to \bC^2$$
we set coordinates $(\lambda,v)\in \bC^2$, where $\lambda=\frac{\xi_0}{\xi_1}$, with
 inverse map $(\lambda,v)\mapsto(v\lambda,v, [\lambda:1])$.

In these coordinates $X_1$ presents as 
$$X_1=\{(x,y),(u,v),\lambda)\in\bC^2\times\bC^2\times\bC \mid u=xy,v=y,x=\lambda\},$$
has independent coordinates $(\lambda,y)$, and thus it is isomorphic to $\bC^2$. In these coordinates,
 $F_1$ (which is the restriction to $X_{1}$ of the projection $X \times \widetilde{\bC^2}\to \widetilde{\bC^2}$) reads as the identity $F_1(\lambda,y)=(\lambda,y)$. This shows that $[0:1]$ is determined-full.  The dual graph of the blossom tree $\cB_{F}$ (cf \S\ref{ss:blossomtree}) is then a dotted disk:
 
 \medskip
 
   $$\odot^{1}$$
\end{example}

\medskip

\begin{example}\label{ex:one} 
Let $X:= \bC^{2}$, and $F:(\bC^2,0)\to(\bC^2,0)$ \ \ $F(x,y)=(x^3y^3,x^2y)$.

By blowing-up the target $\bC^2$ at 0, denoting by $E_1$ the exceptional divisor, we get
that $X_1$ is the closure of the intersection:
$$\bigl\{ ((x,y),(u,v),[\xi_0:\xi_1])\in\bC^2\times\bC^2\times\bP^1 \mid   u=x^3y^3,v=x^2y,u\xi_1=v\xi_0\bigr\} \cap \{ xy\neq 0\}.$$
in the first set. We obtain the inclusion: 
\begin{align*}
X_1\subseteq
\bigl\{ ((x,y),(u,v),[\xi_0:\xi_1])\in\bC^2\times\bC^2\times\bP^1 \mid   u=x^3y^3,v=x^2y,xy^2\xi_1=\xi_0\bigr\},
\end{align*}
which is actually an equality since the right hand side is non-singular and connected, hence irreducible.

\sloppy Since $\pi_1$ is the restriction to $X_1$ of the projection to $(x,y)$, we get
$\pi_1^{-1}(0,0)=\{(0,0),(0,0),[0:1]\}$. This shows that $E_1$ is thin since all points of  $E_1$ are determined-empty  (in the terminology of Definition \ref{d:points}) with the exception of $[0:1]$.  One has to study the situation at this point, as follows.

We work in the chart  $\xi_1\neq 0$ of $\bP^1$, with the coordinate function 
$\lambda= \xi_0/\xi_1$.  Then $X_1\cap\{\xi_1\neq 0\}=\{((x,y,u,v,\lambda)\in\bC^5 \mid   u=x^3y^3,v=x^2y,xy^2=\lambda\}$ which is isomorphic to $\bC^2$ of coordinates $(x,y)$.

Recalling that $F_1$ is the restriction to $X_1$ of  the projection to the coordinates $((u,v),[\xi_0:\xi_1])$, by using the above charts and isomorphism, $F_1$ presents as  $F_1(x,y)=(xy^2,x^2y).$
As a map germ $F_{1}: (\bC^{2}, 0) \to (\bC^{2}, 0)$,  this is not open at the origin of the target. We deduce that our point $\pi_1^{-1}(0,0) \in E_{1}$ is undetermined.

 We continue the algorithm by blowing-up the target of $F_1$ at this point. Let
$E_2=\{(0,0)\}\times\bP^1\subset \bigl\{(\hat u,\hat v,[\hat \xi_0:\hat\xi_1] \mid \hat u\hat\xi_1=\hat v\hat\xi_0\bigr\}$ be the exceptional divisor of this new blow-up. 
We have
$F_1^{-1}(0,0)=\{(x,y)\in\bC^2 \mid xy=0\}$, hence $X_2$ is the closure of the intersection:
$$\bigl\{((x,y),(\hat u,\hat v),[\xi_0:\xi_1])\in\bC^2\times\bC^2\times\bP^1 \mid   \hat u=xy^2,\hat v=x^2y,\hat u\hat \xi_1=\hat v\hat\xi_0\bigr\}\cap\{xy\neq 0\}$$
in the first set.

By a couple of standard operations, we get the inclusion:
\begin{align*}
X_2\subseteq
\bigl\{((x,y),(\hat u,\hat v),[\hat \xi_0:\hat \xi_1])\in\bC^2\times\bC^2\times\bP^1 \mid   \hat u=xy^2,\hat v=x^2y,y\hat \xi_1=x\hat\xi_0\bigr\}
\end{align*}
which once again turns out to be an equality, since the right hand side is a connected nonsingular variety, thus irreducible.

We have 
$\pi_2^{-1}(0,0)=\{(0,0)\}\times\{(0,0)\}\times\bP^1$, thus $E_2$ is thick.  Let us show that the point $[a:1]\in E_2$ is determined-full for $a\neq 0$ and undetermined if $a=0$.

In the chart $\hat\xi_1\neq 0$ of $\bP^1$, with  $\hat\lambda := \hat \xi_0 / \hat \xi_1$, we get:
$$X_2\cap \{\hat\xi_1\neq 0\}= \bigl\{(x,y,\hat u,\hat v,\hat \lambda)\in\bC^5 \mid   \hat u=xy^2,\hat v=x^2y,y=x\hat\lambda \bigr\}
$$
which is isomorphic to $\bC^2$ with coordinates $(x,\hat \lambda)$. We also consider the isomorphism of
$\widetilde{\bC^2}\cap\{\hat\xi_1\neq 0\}=\{(\hat u,\hat v,[\hat\xi_0,\hat\xi_1] \mid  \hat u\hat\xi_1=\hat v\hat\xi_2, \hat\xi_1\neq 0\} \simeq  \bC^2$ with $\bC^2$ in coordinates $(\hat\lambda,\hat v)$.

By using these coordinates, we now obtain the presentation: $F_2(x,\hat \lambda)=(\hat \lambda,x^3\hat \lambda)$.
As a map germ $(\bC^2,(0,a)) \to (\bC^2,(a,0))$, this is open at  $(a,0)$ for any $a\neq 0$,  but not open for $a=0$. This ends the proof that $[0:1]\in E_2$ is the only undetermined point in this chart. 
Pursuing the study in the chart $\hat\xi_1\neq 0$, we blow-up at $\xi = [0:1]$. By similar computations, we find 
$F_{3} : (\bC^2,(0,0)) \to (\bC^2,(0,0))$, $F_3(x,\hat \lambda)=(\hat \lambda,x^3)$, which is an open map germ.  This shows that the exceptional divisor $E_{3}$ is thin and it is a terminal branch.

Similar computations done in parallel in the other chart $\hat\xi_0\neq 0$ show that $[1:0]\in E_2$ is the only undetermined point, and that after a new blow-up at this point we obtain a new exceptional divisor $E'_{3}$ which is thin and terminal.

We get the following picture of the dual graph of the blossom tree $\cB_{F}$:

\medskip

\xymatrix{
&&&&&&\ \odot^3\\
&&&&&\odot^{1}\ar@{-}[r]& \CIRCLE^2\!\!\!\ar@{-}[u]\ar@{-}[d]\\
&&&&&&\ \odot^3
}

\end{example}

\medskip

 \begin{example}\label{ex:n+1,1}
Let $X := \{(x,y,z)\in\bC^3 \mid x^{n+1}=yz\}$,  $F: (X, 0) \to (\bC^2,0), \ \ F(x,y,z)=(x,y).$

Testing with Proposition \ref{p:arkivprop} we get the image of $F$ is not well-defined as set germ since. Indeed $F$ it is not locally open (since the line $(\bC\times\{0\},0)$ is not in the image), and its image is not a curve (since the criterion of Proposition \ref{p:curve} does not hold).  We then start to compute the blossom tree structure of $F_*(\cA_{X,0})$.

\medskip

We blow-up the target $\bC^2$ at the origin, with $E_1$ as the exceptional divisor, and
$F^{-1}(0,0)=\{0\}\times\{0\}\times\bC$. Hence $X_1$ is the closure of the following intersection:
$$\bigl\{ ((x,y,z),(u,v),[\xi_0:\xi_1])\in\bC^3\times\bC^2\times\bP^1 \mid  x^{n+1}=yz, u=x,v=y,u\xi_1=v\xi_0\bigr\} \cap \{(x,y)\neq (0,0)\}$$
 in the first set.

By using standard algebraic manipulations and by taking $y\neq 0$ (since $y=0$ implies $x=0$) one shows the inclusion:
\begin{align*}
X_1\subseteq &
\{((x,y,z),(u,v),[\xi_0:\xi_1])\in\bC^3\times\bC^2\times\bP^1 \mid \\
&  x^{n+1}=yz, u=x,v=y,u\xi_1=v\xi_0, z\xi_1^{n+1}=y^{n}\xi_0^{n+1}\}
\end{align*}
and one can show that this last algebraic is irreducible, which tells that we actually have an equality. 
What is less easy here is to find the equations which define the above irreducible variety.

We see that $\pi_1^{-1}(0,0,0)$ is isomorphic to $\bP^1$, which means that $E_1$ is \emph{thick}.
  Standard computations show that all points in $\{[\xi_0:\xi_1] \mid \xi_1\neq 0\}$ are determined-full.  In the chart  $\xi_0\neq 0$ of coordinate $\lambda = \xi_{1}/\xi_{0}$, the component $X_1$ presents as:
$$\{((x,y,z),(u,v),\lambda)\in\bC^3\times\bC^2\times\bC \mid  x^{n+1}=yz, u=x,v=y,u\lambda=v, z\lambda^{n+1}=y^{n}\}
$$
which, by projection in $\bC^4$ and reduction, identifies to:
$$
\{((x,y,z,\lambda)\in\bC^4 \mid x^n=z\lambda, x\lambda=y\}.
$$
By another projection, this  identifies in turn to the set $\{(x,z,\lambda) \mid x^n=z\lambda\}\subset \bC^3.$

In the chart coordinates $(u,\lambda)$ for 
$\widetilde{\bC^2}=\{(u,v,[\xi_0:\xi_1]):u\xi_1=v\xi_0\}\subset \bC^2\times\bP^1$, the map
$F_1$ presents as:
$$ \{((x,z,\lambda)\in\bC^3 \mid x^n=z\lambda\}\ni(x,z,\lambda) \mapsto
(x,\lambda)\in\bC^2
$$
which is similar to $F$, namely one replaces the exponent $n+1$ by $n$.

By iterating the above computations, the exponent  drops by $1$ at each step. At the last step we get the presentation: 
$$F_n: (X_{n}, 0)\to (\bC^2,0), \ \ F_n(x,y, z)=(x,y),$$
where $X_{n} = \{(x,y,z) \mid x=yz\}$. Since $X_{n}$ is isomorphic to $\bC^2$ via the projection $(x,y,z)\to (y,z)$,  the map $F_n$ identifies to
$F_n(y,z)=(yz,z)$. By a new blow-up, this  generates one thin divisor with a determined-full point (as in Example \ref{ex:zero}).

The dual graph of the blossom tree $\cB_{F}$ is therefore:

\medskip

\xymatrix{&&&&\CIRCLE^1\ar@{-}[r]&\CIRCLE^2\ar@{-}[r]&\cdots\ar@{-}[r]&\CIRCLE^n\ar@{-}[r]&\odot^{n+1}}

\end{example}

 \begin{example}\label{ex:n+1,n}
Let $X=\{(x,y,z)\in\bC^3\ \vert \ x^{n+1}=y^{n}z\}$ for $n\in\bZ$, $n\geq 1$, and $F:X\to\bC^2$ given by $F(x,y,z)=(x,y)$. Notice first that the image of $F$ is not well defined as a set germ, since (cf  Propositions \ref{p:arkivprop} and \ref{p:curve}) it is neither locally open, nor a curve germ (because $\jac F$ is not less than 2 on $\Reg X$).

 We may then apply our algorithm. Despite the apparent similarity to the preceding example, we obtain a totally different shape for the dual graph of the blossom tree $\cB_{F}$:

\medskip

\xymatrix{
&&&&& \CIRCLE^1\ar@{-}[r]& \odot^{2}
}

\end{example}

\bigskip



\begin{thebibliography}{MMM}

\bibitem[CJ]{CJ} M. Col\c toiu, C. Joi\c ta, \emph{On the parameterization of germs of two-dimensional singularities}. J. Geom. Anal. 25 (2015), no. 4, 2427-2435.

\bibitem[Gr]{Gr}
H. Grauert, \emph{\"Uber Modifikationen und exzeptionelle analytische Mengen}. Math. Ann. 146 (1962), 331-368.


\bibitem[GR]{GR} H. Grauert, R. Remmert, Coherent analytic sheaves. Grundlehren der Mathematischen Wissenschaften, 265.  Springer-Verlag, Berlin, 1984.


\bibitem[Hu]{Hu} A.T. Huckleberry, 
\emph{On local images of holomorphic mappings.} Ann. Sc. Norm. Sup. Pisa (3) 25 (1971), 447-467.


\bibitem[JT]{JT-arkiv} 
C. Joi\c ta, M. Tib\u ar, \emph{Images of analytic map germs}.  Arkiv f\"or Matematik  59 (2021), 345-358.
 
\bibitem[Ma]{Ma}
J.N. Mather, \emph{ Stratifications and mappings.}
Dynamical systems (Proc. Sympos., Univ. Bahia, Salvador, 1971), pp. 195-232. Academic Press, New York, 1973.


\bibitem[Mu]{Mu} D. Mumford, \emph{The topology of normal singularities of an algebraic surface and a criterion for
simplicity}. Publ. Math. IHES 9 (1961), 229-246.


\bibitem[Re]{Re0} R. Remmert, \emph{Projektionen analytischer Mengen.} Math. Ann. 130 (1956), 410-441.




\end{thebibliography}
\end{document}